\documentclass[a4paper]{amsart}
\usepackage{amssymb}
\usepackage{stmaryrd}
\usepackage[all]{xy}
\usepackage{epsf}
\usepackage{enumerate}
\newcommand{\printname}[1] {}
\newcommand{\labell}[1] {\label{#1}\printname{#1}}
\newtheorem{theorem}{Theorem}
\newtheorem{proposition}{Proposition}
\newtheorem{lemma}{Lemma}
\newtheorem{corollary}{Corollary}

\theoremstyle{remark}
\newtheorem{remark}{Remark}


\vfuzz12pt 
\hfuzz12pt 


\newcommand{\Rr}{\mathbb R}

\newcommand{\X}{\ensuremath{\mathfrak{X}}}

\newcommand{\F}{\ensuremath{\mathcal{F}}}

\newcommand{\tto}{\rightrightarrows}

\newcommand{\G}{\mathcal{G}}            
\newcommand{\al}{\alpha}                
\newcommand{\be}{\beta}                 
\newcommand{\Lie}{\mathcal{L}}          
\renewcommand{\gg}{\mathfrak{g}}        



\renewcommand{\d}{\mathrm{d}}

\DeclareMathOperator{\Hol}{Hol}


\newcommand{\comment}[1]{}

\renewcommand{\labell}{\label} 

\begin{document}
\title{A geometric approach to Conn's linearization theorem}

\dedicatory{Dedicated to Alan Weinstein}
\author{Marius Crainic}
\address{Depart. of Math., Utrecht University, 3508 TA Utrecht, 
The Netherlands}
\email{crainic@math.uu.nl}

\author{Rui Loja Fernandes}
\address{Depart.~de Matem\'{a}tica, 
Instituto Superior T\'{e}cnico, 1049-001 Lisboa, PORTUGAL} 
\email{rfern@math.ist.utl.pt}

\thanks{MC was supported in part by NWO and a Miller Research Fellowship. RLF was supported in part by FCT/POCTI/FEDER and grants
  POCI/MAT/55958/2004 and POCI/MAT/57888/2004.}


\begin{abstract}
We give a soft geometric proof of the classical result due to Conn
stating that a Poisson structure is linearizable around a singular point (zero)
at which the isotropy  Lie algebra is compact 
and semisimple.
\end{abstract}
\maketitle

\setcounter{tocdepth}{1}

\section*{Introduction}             %
\labell{Linearization: Outline}    %

Recall that a \textbf{Poisson bracket} on a manifold $M$ is a Lie bracket
$\{\cdot,\cdot\}$ on the space $C^{\infty}(M)$ of smooth functions on
$M$, satisfying the derivation property
\[ \{ fg, h\}= f\{ g, h\}+ g\{ f, h\}, \quad f,g,h \in
C^{\infty}(M).\]
Let us fix a \textbf{zero} of the Poisson bracket, i.e., a point $x_0\in M$ where 
$\{f,g\}(x_0)=0$, for all functions $f,g\in C^{\infty}(M)$. Then $T_{x_0}^*M$  becomes a 
Lie algebra with the Lie bracket:
\[ [\d_{x_0} f,\d_{x_0} g]:=\d_{x_0}\{f,g\}.\] 
This Lie algebra is called the \textbf{isotropy Lie algebra} at $x_0$ and will
be denoted by $\gg_{x_0}$. Equivalently, the tangent space $T_{x_0}M=\gg_{x_0}^*$ 
carries a canonical linear Poisson bracket called the \textbf{linear approximation}
at $x_0$. The \emph{linearization problem} for $(M,\{\cdot,\cdot\})$ around $x_0$ is the following:
\begin{itemize}
\item Is there a Poisson diffeomorphism $\phi:U\to V$ from a neighborhood 
$U\subset M$ of $x_0$ to a neighborhood $V\subset T_{x_0}M$ of $0$?
\end{itemize}
When $\phi$ exists, one says that the Poisson structure is \textbf{linearizable}
around $x_0$. The most deep linearization result is the following theorem
due to Conn \cite{Conn2}:

\begin{theorem}
\labell{thm:main}
Let $(M,\{\cdot,\cdot\})$ be a Poisson manifold with a zero $x_0\in M$. If the isotropy
Lie algebra $\gg_{x_0}$ is semisimple of compact type, then $\{\cdot,\cdot\}$ is linearizable 
around $x_0$.
\end{theorem}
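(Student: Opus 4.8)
The plan is to prove Conn's theorem by a geometric argument that replaces the original hard analytic estimates (Nash--Moser / rapidly convergent iteration) with a combination of Lie-algebroid/Lie-groupoid integration techniques and a Moser-type path method. The key object is the cotangent Lie algebroid $T^*M$ associated to the Poisson structure: a zero $x_0$ with isotropy $\gg_{x_0}=\gg$ gives, on a small neighborhood, an algebroid whose restriction to $x_0$ is the Lie algebra $\gg$. Since $\gg$ is semisimple of compact type, it is the Lie algebra of a compact Lie group $G$; I expect the first main step to be a \emph{normal form / local integrability} statement, showing that near $x_0$ the cotangent algebroid of $(M,\{\cdot,\cdot\})$ is integrable by a (Hausdorff, even proper) Lie groupoid $\G \tto U$. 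The relevant input here is that compact semisimple $\gg$ is rigid ($H^1(\gg,\gg)=H^2(\gg,\gg)=0$), so the isotropy does not deform as we move away from $x_0$, and the monodromy obstructions to integrability vanish on a neighborhood; compactness of $G$ should moreover give \emph{properness} of $\G$.

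Granting a proper integrating groupoid, the second step is averaging. A proper Lie groupoid carries a Haar system and hence one can average geometric data over its orbits; in particular I would average a metric to produce an invariant metric, and more importantly average to produce a groupoid map from $\G$ (near the identities) to the action groupoid $G\ltimes \gg^*$ of the coadjoint action, i.e.\ the source-simply-connected groupoid integrating the linear Poisson structure on $\gg^*=T_{x_0}M$. The point is that two Poisson structures with the same linear approximation at $x_0$ have cotangent algebroids that agree to first order at $x_0$, hence (by rigidity of $\gg$ again) are isomorphic as algebroids on a neighborhood, and this isomorphism can be integrated to a diffeomorphism intertwining the two groupoids. Concretely, I would: (i) identify $U$ equivariantly with a neighborhood of $0$ in $\gg^*$ using the averaged metric and exponential map, transporting the symplectic form on $\G$ (it is a symplectic groupoid, since $T^*M$ integrates to a symplectic groupoid) to a symplectic form $\omega$ near the identities; (ii) compare $\omega$ with the canonical symplectic form $\omega_0$ of $T^*(G)$-type on the model groupoid; (iii) run a Moser path argument $\omega_s=(1-s)\omega_0+s\omega$, solving for a time-dependent vector field whose flow is a \emph{groupoid} isotopy — here invariance under the groupoid multiplication forces the primitive of $\omega-\omega_0$ to be chosen equivariantly, which the averaging provides. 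Passing to the quotient (units, or the base) then yields the desired Poisson diffeomorphism $\phi\colon U\to V\subset\gg^*$.

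The main obstacle, and the place where the real work lies, is the \emph{local integrability of the cotangent algebroid near a singular point} together with obtaining a \emph{proper} integration: away from $x_0$ the symplectic leaves can have nontrivial topology, so one must show the monodromy groups stay discrete and in fact that a \emph{uniform} neighborhood works, and then upgrade this to properness using the compactness of $G$. I expect this to require a careful analysis of the linear holonomy and of how the leaf space looks near $x_0$ (a ``slice'' or normal-form statement for the foliation transverse to $x_0$), plus a gluing argument. Once properness is in hand, the averaging and Moser steps are relatively soft and formal, and the compact-semisimple hypothesis enters exactly at the two crucial places: rigidity of $\gg$ (vanishing of the low cohomology) to kill deformations and integrability obstructions, and compactness of the integrating group to make the groupoid proper so that averaging is available. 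A secondary technical point will be keeping all constructions $G$-equivariant (or groupoid-equivariant) throughout, so that the final diffeomorphism is genuinely Poisson and not merely a diffeomorphism of the underlying manifolds.
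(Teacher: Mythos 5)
Your overall architecture overlaps substantially with the paper's: obtain a Hausdorff, proper integration of $T^*M$ near $x_0$, exploit properness via averaging, and finish with a Moser path argument. You have also correctly located the genuinely hard step (local integrability by a proper groupoid with the right fibers); the paper does this not by analyzing monodromy obstructions directly but by constructing a symplectic realization $\nu\colon S\to U$ with $\nu^{-1}(x_0)\cong G$ compact and $1$-connected out of the Banach manifold of cotangent paths (quotienting a transversal by only part of the holonomy, via a delicate gluing argument on the foliated path space), and then invoking Reeb stability. Your sketch of this step is a statement of intent rather than an argument, but the intent is the right one.

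The genuine gap is in your second half. You claim that because $H^1(\gg,\gg)=H^2(\gg,\gg)=0$, two Lie algebroids agreeing to first order at $x_0$ are isomorphic on a neighborhood, and that the groupoid-level Moser step is ``soft and formal.'' Neither is true as stated: rigidity with \emph{finite-dimensional} coefficients controls formal deformations only, and upgrading first-order agreement of $T^*M$ with $T^*\gg^*$ to a local isomorphism is essentially the theorem you are trying to prove. The cohomology that actually has to vanish lives in infinite-dimensional coefficients (bivector fields, i.e.\ $H^2_\pi$ near $x_0$), and this is exactly where Conn needed Nash--Moser estimates. The paper's way around this is the step your proposal is missing: for a \emph{proper} groupoid the differentiable groupoid cohomology vanishes by averaging, and the Van Est theorem transfers this vanishing to the algebroid (=\,Poisson) cohomology in degrees $\le 2$ because the $s$-fibers of $\G\tto V$ are homologically $2$-connected; the Moser method is then run directly on the path of bivectors $\pi_t(x)=\tfrac1t\pi(tx)$ on the base, not on multiplicative symplectic forms on $\G$. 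Your alternative of first linearizing the groupoid itself (mapping $\G$ to $G\ltimes\gg^*$ by averaging) is precisely the linearization theorem for proper groupoids, which the paper explicitly notes would yield only a ``geometric-analytic'' proof since it again involves estimates; moreover your Moser path $\omega_s=(1-s)\omega_0+s\omega$ between the two multiplicative symplectic forms is not obviously nondegenerate, and producing a \emph{multiplicative} primitive of $\omega-\omega_0$ would itself require a Van Est--type argument. So the averaging/Moser half of your plan either smuggles in the hard analysis or circles back to the cohomological mechanism you omitted.
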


Note that there exists a simple well-known criterion to decide if $\gg_{x_0}$ is semisimple of compact type: its \emph{Killing form} $K$ must be negative definite.

The proof given by Conn in \cite{Conn2} is analytic. He uses a
combination of Newton's method with smoothing operators, as devised by
Nash and Moser, to construct a converging change of
coordinates. This proof is full of difficult estimates and, in spite of
several attempts to find a more geometric argument, it is 
the only one available up to now. See, also, the historical comments at
the end of this paper.

In this paper we will give a soft geometric proof of this result using Moser's path
method. At the heart of our proof is an integration argument and an averaging argument. The averaging enters
into the proof in a similar fashion to the proofs of other linearization theorems, such as Bochner's Linearization Theorem for actions of compact Lie group around fixed points. Our proof gives a new geometric insight to the theorem, clarifies the compactness assumption, and should also work in various other situations. More precisely, the proof consists of the following four steps:
\begin{description}
\item[Step 1] \emph{Moser's path method.} Using a Moser's path method, we prove a \emph{Poisson version} of Moser's theorem (see Theorem \ref{thm:Moser}), which is inspired by the work of Ginzburg and Weinstein \cite{GiWe}. It reduces the proof of Conn's Theorem to showing that the 2nd Poisson cohomology around $x_0$ vanishes.
\item[Step 2] \emph{Reduction to integrability around a fixed point.} Using the vanishing of cohomology for proper Lie groupoids and the general Van Est theorem relating groupoid and algebroid cohomology \cite{Cra}, we show that it is enough to prove integrability of the Poisson structure around a fixed point $x_0$.
\item[Step 3] \emph{Reduction to the existence of symplectic realizations.} Using the equivalence of integrability in the Poisson case and the existence of complete symplectic realizations \cite{CrFe2}, we show that it is enough to construct a symplectic realization of a neighborhood of $x_0$ with the property that the fiber over $x_0$ is 1-connected and compact.
\item[Step 4] \emph{Existence of symplectic realizations.} The same path space used in \cite{CrFe1} to determine the precise obstructions to integrate a Lie algebroid and to explicitly construct an integrating Lie groupoid, yields that a neighborhood of $x_0$ admits the desired symplectic realization.
\end{description}

The fact that the tools that we use only became available recently probably explains why it took more than 20 years to find a geometric proof of Conn's Theorem.

The four sections that follow describe each of the steps in the proof. We conclude the paper with two appendices: the first one contains an auxiliary proposition on foliations (which is used in the last step), while in the second one make some historical remarks.

Finally, we would like to mention that our method works in other situations as well. A similar linearization result around symplectic leaves instead of fixed points
is being worked out in \cite{Ionut}. The analogue of Conn's Theorem for Lie algebroids (conjectured in \cite{Wein2} and proved in \cite{MonZun}) can also be proved by our method, the only missing step being the proof of the vanishing conjecture of \cite{CrMo} (one must replace the Poisson cohomology of Step 1 by the deformation cohomology of \cite{CrMo}). Details will be given elsewhere. 
It would also be interesting to find a similar geometric proof of the smooth Levi decomposition theorem of Monnier 
and Zung \cite{MonZun}.

\section*{Step 1: Moser's path method}        %

Let us start by recalling that a Poisson bracket $\{\cdot,\cdot\}$ on $M$ can
also be viewed as a bivector field $\pi\in \Gamma(\wedge^2 TM)$ with
zero Schouten bracket $[\pi,\pi]=0$. One determines the other through
the relation  
\[ \pi(\d f\wedge \d g)=\{f,g\},\quad (f,g\in C^{\infty}(M)).\] 
Recall also, that the \textbf{Poisson cohomology} of $M$ (with trivial coefficients) is the cohomology of the complex
$(\X^k(M),\d_\pi)$, where $\X^k(M)$ is the space of $k$-vector fields,
and the differential is defined by 
\[ \d_\pi \theta:=[\pi,\theta].\]

When $x_0$ is a zero of $\pi$, we can consider the \textbf{local Poisson cohomology groups} $H_{\pi}^k(M,x_0)$. By this we mean the Poisson cohomology group of the germ of $(M,\pi)$ at $x_0$, i.e., the group $\varinjlim H_{\pi}^k(U)$ obtained by taking the direct limit of the ordinary Poisson cohomology groups of $U$, when $U$ runs over the filter of open neighborhoods of $x_0$.
\vskip 10 pt

\begin{theorem}
\labell{thm:Moser}
Let $(M,\{\cdot,\cdot\})$ be a Poisson manifold with a zero $x_0$.
Assume that the Lie algebra cohomology groups $H^1(\gg_{x_0})$ and $H^1(\gg_{x_0}, \gg_{x_0}^*)$ vanish.
If $H^{2}_{\pi}(M, x_0)= 0$, then $\{\cdot,\cdot\}$ is linearizable at $x_0$.
\end{theorem}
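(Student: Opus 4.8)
The plan is to run Moser's path method on the rescaled path joining $\pi$ to its linear approximation. Fix local coordinates centred at $x_0$, write $m_t\colon x\mapsto tx$, set $\pi_t:=t\,(m_t)^{*}\pi$ for $t\in(0,1]$, and let $\pi_0$ be the linear approximation of $\pi$ at $x_0$ (the linear Poisson structure on $\gg_{x_0}^{*}=T_{x_0}M$). Each $\pi_t$ with $t>0$ is a rescaled pull-back of $\pi$, hence Poisson; by Hadamard's lemma the family extends smoothly to $t=0$ with value $\pi_0$, and $\pi_1=\pi$, so we have a smooth path of Poisson bivectors on a \emph{fixed} ball $U\ni x_0$. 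Differentiating $[\pi_t,\pi_t]=0$ shows each $\dot\pi_t$ is a $\d_{\pi_t}$-cocycle. I will look for a smooth family of local vector fields $X_t$ vanishing at $x_0$ and solving the Moser equation $\dot\pi_t=\d_{\pi_t}X_t$; then the time-dependent flow $\phi_t$ of $X_t$ (normalised by $\phi_1=\mathrm{id}$) fixes $x_0$, is defined on a fixed neighbourhood, and satisfies $(\phi_t)_{*}\pi_t\equiv\pi$, so $\phi_0$ is a Poisson diffeomorphism from a neighbourhood of $0$ in $\gg_{x_0}^{*}$ onto a neighbourhood of $x_0$ --- i.e.\ $\pi$ is linearizable.

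The key device is a reformulation of the Moser equation through the Euler vector field $E=\sum_i x_i\,\partial/\partial x_i$, the generator of the dilations $m_t$: from $(m_t)^{*}E=E$ and $\Lie_E\pi_0=-\pi_0$ one obtains the identity $t\,\dot\pi_t=\pi_t-\d_{\pi_t}E$, so the Moser equation is equivalent to $\d_{\pi_t}Y_t=\pi_t$ for $Y_t:=E+tX_t$, together with $Y_0=E$. Everything thus reduces to producing a primitive $Y_t$ of $\pi_t$ depending smoothly on $t\in[0,1]$ and equal to $E$ at $t=0$. At $t=0$ this is automatic, since $\d_{\pi_0}E=-\Lie_E\pi_0=\pi_0$. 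For $t>0$, $\pi_t$ is a $\d_{\pi_t}$-cocycle whose class vanishes: the dilation $m_t$ identifies the germ of $\pi_t$ at $x_0$ with a (constant-rescaled) germ of $\pi$, so $H^{2}_{\pi_t}(M,x_0)\cong H^{2}_{\pi}(M,x_0)=0$ and a primitive exists on some neighbourhood of $x_0$.

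The two Lie-algebra hypotheses enter to control the base point and the endpoint $t=0$. First, $H^{1}(\gg_{x_0})=0$ forces $Y_t(x_0)=0$ for \emph{any} primitive: evaluating $\d_{\pi_t}Y_t=\pi_t$ at $x_0$, and using that $\pi_t(x_0)=0$ with $1$-jet $\pi_0$ there, the constant vector field with value $Y_t(x_0)$ must be a Poisson vector field of the linear structure $\pi_0$; but these form the space $\{\xi\in\gg_{x_0}^{*}:\xi|_{[\gg_{x_0},\gg_{x_0}]}=0\}\cong H^{1}(\gg_{x_0})=0$, so $Y_t(x_0)=0$, and since $E$ vanishes at $x_0$ so does $X_t=(Y_t-E)/t$. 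Second, $H^{1}(\gg_{x_0},\gg_{x_0}^{*})=0$ is used to normalise the remaining ambiguity in $Y_t$: comparing lowest-order parts of $\d_{\pi_t}Y_t=\pi_t$ shows $Y_t-E$ has its linear part a linear Poisson vector field of $\pi_0$, and this vanishing lets that contribution be removed, so that the $Y_t$ can be chosen smooth in $t$ up to $t=0$ with $Y_0=E$.

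The main obstacle is exactly this last step: the \emph{uniform} (in $t$, up to $t=0$) solvability of $\d_{\pi_t}Y_t=\pi_t$. Concretely, I would differentiate $\d_{\pi_t}Y_t=\pi_t$ in $t$, solve the resulting cohomological equation for $\dot Y_t$ using $H^{2}_{\pi}(M,x_0)=0$, and --- using the Euler identity together with the two vanishing hypotheses --- show that $\dot Y_t$ stays bounded as $t\to0$; equivalently, build homotopy operators for the complexes $(\X^{\bullet}(U),\d_{\pi_t})$ depending well on $t$. This is the soft replacement for the analytic heart of Conn's original argument (Newton iteration plus Nash--Moser smoothing): the hypothesis $H^{2}_{\pi}(M,x_0)=0$ is what allows one to solve without loss of derivatives, while the Lie-algebra conditions make the linear model rigid enough near $x_0$ for the family to close up at $t=0$. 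Granting this, integrating the flow of $X_t$ produces the linearizing diffeomorphism $\phi_0$ and finishes the proof.
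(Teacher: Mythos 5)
Your setup is the same as the paper's (the dilation path $\pi_t(x)=\frac{1}{t}\pi(tx)$, Moser's trick, and the correct identification of where $H^1(\gg_{x_0})=0$ and $H^1(\gg_{x_0},\gg_{x_0}^*)=0$ enter: forcing a primitive to vanish at $x_0$ and killing its linear part). But the proposal has a genuine gap at exactly the point you flag as ``the main obstacle'': you never actually produce a family $X_t$ (equivalently $Y_t$) that is smooth, or even bounded, down to $t=0$. The strategy you sketch for closing it --- differentiating the cohomological equation in $t$ and building homotopy operators for $(\X^{\bullet}(U),\d_{\pi_t})$ depending well on $t$ --- is not carried out, and is precisely the kind of quantitative control that would reintroduce the hard analysis the hypothesis $H^{2}_{\pi}(M,x_0)=0$ is supposed to replace. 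As written, the argument does not reach a proof.

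The point you are missing is that the self-similarity of the path, which you invoke only to get solvability at each \emph{fixed} $t>0$, in fact produces the entire family from a \emph{single} solution at $t=1$, so no uniformity-in-$t$ issue ever arises. Concretely: use $H^{2}_{\pi}(M,x_0)=0$ once to write $\dot{\pi}_1=\d_\pi Y$ on a ball; use $H^1(\gg_{x_0})=0$ to conclude $Y(x_0)=0$ (since $\dot{\pi}_1$ vanishes at $x_0$, the value $Y_{x_0}$ annihilates $[\gg_{x_0},\gg_{x_0}]=\gg_{x_0}$); use $H^1(\gg_{x_0},\gg_{x_0}^*)=0$ to subtract a Hamiltonian vector field $\d_\pi v$, $v\in\gg_{x_0}$, so that $X=Y-\d_\pi v$ has vanishing linearization at the origin. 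Then the rescaled family $X_t(x):=\frac{1}{t^2}X(tx)$ automatically satisfies $\Lie_{X_t}\pi_t=-\dot{\pi}_t$ for all $t$, because $\pi_t$ is obtained from $\pi$ by the same dilation; and since $X$ vanishes to second order at $0$, $X_t$ is well defined with $X_t(0)=0$ for all $t\in[0,1]$, so its flow exists on a fixed neighbourhood and transports $\pi_t$ to $\pi_0$. This one-shot rescaling is the soft replacement for the uniform estimates you were planning, and it is the step your proposal needs to be complete.
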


For the proof, we will apply a Poisson version of Moser's path method. Since this is a local result, we can assume that $M=\Rr^m$ and $x_0=0$. Also, to simplify the notation we denote by $\gg$ the isotropy Lie algebra at $0$. We consider the path of Poisson structures $\pi_t$ on $\Rr^m$ defined by the formula
\[ \pi_t(x)=\frac{1}{t}\pi(tx), \quad (t\in [0,1]).\]
Then $\pi_1=\pi$, while $\pi_0=\pi^\text{lin}$ is the linearization of
$\pi$ at the origin. Moser's method will give us an isotopy
$\{\phi_t\}$, $0\le t\le 1$, defined in a neighborhood of the origin,
and such that
\[ (\phi_t)_*\pi_t=\pi_0, \quad (t\in [0,1]).\]
Therefore $\phi_1$ will be the desired linearization map.
To construct $\phi_t$ let us consider the bivector field
$\dot{\pi}_t:=\frac{\d\pi_t}{\d t}$. 

\begin{lemma}
There exists a vector field $X$ around the origin $0\in \Rr^n$ such that
\begin{equation}
\labell{eq:deform}
\Lie_X \pi=-\dot{\pi}_1,
\end{equation}
and $X$ has zero linearization at the origin.
\end{lemma}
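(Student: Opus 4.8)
The plan is to construct $X$ in two stages: first find \emph{some} vector field $X_0$ with $\d_\pi X_0=\dot\pi_1$, and then subtract from it a Poisson (i.e.\ $\d_\pi$-closed) vector field chosen so as to cancel its $0$- and $1$-jet at the origin. The vanishing of $H^1(\gg)$ and $H^1(\gg,\gg^*)$ is exactly what makes this last adjustment possible. (Being inside the proof of Theorem~\ref{thm:Moser}, we may assume $H^2_\pi(M,x_0)=0$.)

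Two preliminary observations on $\dot\pi_1$. Since $\pi_t(x)=\frac1t\pi(tx)$ is a smooth path of Poisson bivectors on $\Rr^m$ (the factor $\frac1t$ is harmless because $\pi$ vanishes at the origin), differentiating $[\pi_t,\pi_t]=0$ at $t=1$ gives $[\pi,\dot\pi_1]=0$, so $\dot\pi_1$ is a $\d_\pi$-cocycle. Moreover, writing $E=\sum_i x_i\,\partial_i$ for the Euler vector field, a short computation gives $\dot\pi_1=\pi+\Lie_E\pi$; decomposing $\pi=\sum_{k\ge1}\pi^{(k)}$ into homogeneous Taylor components ($\pi^{(0)}=0$ as $x_0$ is a zero) and using $\Lie_E\pi^{(k)}=(k-2)\pi^{(k)}$, this becomes $\dot\pi_1=\sum_{k\ge2}(k-1)\pi^{(k)}$. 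In particular $\dot\pi_1$ has no constant and no linear part.

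By $H^2_\pi(M,x_0)=0$ there is, on a neighbourhood of the origin, a vector field $X_0$ with $\d_\pi X_0=\dot\pi_1$, equivalently $\Lie_{X_0}\pi=-\dot\pi_1$; I then normalize it. The point is that the degree-preserving part of $\d_\pi$ is $[\pi^{\mathrm{lin}},\,\cdot\,]$, which under the standard identification of polynomial multivector fields on $T_{x_0}M=\gg^*$ with Chevalley--Eilenberg cochains of $\gg$ is the Chevalley--Eilenberg differential. Comparing homogeneous components of $\d_\pi X_0=\dot\pi_1$ in degrees $0$ and $1$, where the right-hand side contributes nothing: in degree $0$ one gets $[\pi^{\mathrm{lin}},X_0^{(0)}]=0$, so the constant vector field $X_0^{(0)}$ is a $1$-cocycle of $\gg$ with trivial coefficients, whence $X_0^{(0)}=0$ since $H^1(\gg)=0$ (equivalently, $\gg$ is perfect). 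Given $X_0^{(0)}=0$, the degree-$1$ comparison gives $[\pi^{\mathrm{lin}},X_0^{(1)}]=0$: the linear vector field $X_0^{(1)}$ preserves $\pi^{\mathrm{lin}}$, hence corresponds to a derivation of $\gg$, i.e.\ to a class in $H^1(\gg,\gg^*)$; as this group vanishes, $X_0^{(1)}$ is an inner derivation, say attached to $\zeta_0\in\gg$. Choose any smooth $f$ near $x_0$ with $f(x_0)=0$ and $\d_{x_0}f=\zeta_0$ (e.g.\ a linear function in suitable coordinates). Its Hamiltonian vector field $\d_\pi f$ vanishes at $x_0$ (because $\pi(x_0)=0$) and has linear part $[\pi^{\mathrm{lin}},\d_{x_0}f]=X_0^{(1)}$. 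Therefore $X:=X_0-\d_\pi f$ still satisfies $\d_\pi X=\d_\pi X_0=\dot\pi_1$ (since $\d_\pi^2=0$) and now vanishes to second order at the origin; in particular $\Lie_X\pi=-\dot\pi_1$ and $X$ has zero linearization at the origin, as required.

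The only genuinely substantive ingredient is $H^2_\pi(M,x_0)=0$, which is established in the subsequent steps of the paper; that is where the difficulty lies. In the argument above, the one point requiring some care is the identification of the degree-$0$ and degree-$1$ pieces of the Lichnerowicz differential with Chevalley--Eilenberg differentials of $\gg$, together with the matching of sign conventions. After that, the two first Lie algebra cohomology groups do precisely what is needed: they let one absorb the constant and linear parts of an arbitrary primitive of $\dot\pi_1$.
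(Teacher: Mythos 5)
Your argument is correct and is essentially the paper's own proof: both take a primitive $Y$ of the cocycle $\dot\pi_1$ on a small ball (using $H^2_\pi(M,x_0)=0$), kill its value at the origin via $H^1(\gg)=0$, and kill its linearization via $H^1(\gg,\gg^*)=0$ by subtracting the Hamiltonian vector field of a function with prescribed differential at $x_0$. The only (harmless) difference is presentational: you establish the vanishing of the $1$-jet of $\dot\pi_1$ by the Euler-vector-field identity $\dot\pi_1=\sum_{k\ge 2}(k-1)\pi^{(k)}$ and phrase the degree $0$ and $1$ comparisons in Chevalley--Eilenberg language, where the paper simply evaluates at $x_0$ and compares linearizations.
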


\begin{proof}
Differentiating the equation $[\pi_t,\pi_t]=0$ with respect to $t$, we
obtain
\[ \d_\pi\dot{\pi}_1=[\pi,\dot{\pi}_1]=0,\]
so $\dot{\pi}_1$ is a Poisson 2-cocycle. Hence its restriction to a
ball around the origin will be exact, i.e., we find a vector field $Y$
on the ball such that
\[ \dot{\pi}_1=\d_\pi Y=-\Lie_Y\pi.\]
This relation has two consequences:
\begin{enumerate}[(a)]
\item Since $\dot{\pi}_1$ vanishes at $x_0$, if we evaluate both
  sides on a pair of 1-forms and set $x=0$, we see that
  $Y_{x_0}([\al,\be])=0$, for $\al,\be\in\gg_{x_0}$.  Since $H^1(\gg)=0$ (i.e.
  $[\gg, \gg]= \gg$), we conclude that $Y_{x_0}=0$. Let $Y_0$ be the
  linearization of $Y$ at the origin.
\item Since $\dot{\pi}_1$ has zero linearization at the origin,
  if consider the linear terms of both sides at $x_0$ we obtain
  \[ \d_{\pi_0}Y_0=0.\]
\end{enumerate}
Hence, $Y_0$ is a 1-cocycle for the coadjoint representation of
$\gg_x$. Since $H^1(\gg, \gg^*)=0$, $Y_0$ must be a coboundary, so
there exists $v\in\gg$ such that $Y_0=\d_{\pi_0}v$.
The vector field $X=Y-\d_{\pi}v$ satisfies (\ref{eq:deform}) and has
zero linearization at the origin.
\end{proof}

\begin{proof}[Proof of Theorem \ref{thm:Moser}]
If $X$ is a vector field as in the Lemma, consider the time-dependent
vector field $X_t(x):=\frac{1}{t^2}X(tx)$. From (\ref{eq:deform}) we
obtain immediately that
\[ \Lie_{X_t} \pi_t=-\dot{\pi}_t.\]
Let $\phi_t$ be the flow of $X_t$. Since $X_t(0)=0$, we see that
$\phi_t$ is defined in some neighborhood $V$ of the origin for $0\le
t\le 1$. Also, we compute:
\[
\frac{\d}{\d t}(\phi_t)_*\pi_t=
(\phi_t)_*\left(\Lie_{X_t}\pi_t+\frac{\d\pi_t}{\d t}\right)=0.
\]
We conclude that $\phi_t$ is a diffeomorphism of $V$ with the desired property.
\end{proof}

\section*{Step 2: Reduction to integrability around a fixed point}%

In this section we explain the statement and we prove the following proposition which,
when combined with Theorem \ref{thm:Moser}, reduces the
proof of Conn's Theorem to integrability around a fixed point:


\begin{proposition}
\labell{prop:integrable}
Let $(M,\pi)$ be a Poisson manifold, $x_0\in M$ a fixed point. If 
some neighborhood of $x_0$ is integrable by a Hausdorff
Lie groupoid with 1-connected s-fibers, then 
\[ H^{2}_{\pi}(M, x_0)= 0.\]
More precisely, 
\begin{enumerate}[(i)]
\item There exist arbitrarily small
neighborhoods $V$ of $x_0$ which are integrable by Hausdorff proper 
groupoids $\G\tto V$ with homological $2$-connected fibers. 
\item For any such $V$, $H^{1}_{\pi}(V)=H^{2}_{\pi}(V)= 0$.
\end{enumerate}
\end{proposition}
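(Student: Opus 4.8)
The plan is to recognize Poisson cohomology as Lie algebroid cohomology and then feed the given integration into the two main theorems of \cite{Cra}: the Van Est theorem and the vanishing of differentiable cohomology for proper groupoids. With these in hand, part (ii) and the displayed equality $H^2_\pi(M,x_0)=0$ are essentially formal; the real work is in part (i), producing a \emph{proper} integration of arbitrarily small neighborhoods of the fixed point.

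First I would recall that for an open $U\subseteq M$ the Poisson complex $(\X^\bullet(U),\d_\pi)$ is precisely the Chevalley--Eilenberg complex of the cotangent Lie algebroid $T^*_\pi U$, so $H^\bullet_\pi(U)=H^\bullet(T^*_\pi U)$; hence any Lie groupoid $\G\tto V$ integrating $T^*_\pi V$ carries a Van Est map $H^\bullet_d(\G)\To H^\bullet_\pi(V)$ out of its differentiable cohomology. Granting (i), pick $\G\tto V$ Hausdorff, proper, integrating $T^*_\pi V$, with homologically $2$-connected $\s$-fibers. By the Van Est theorem \cite{Cra}, homological $2$-connectedness of the $\s$-fibers makes this map an isomorphism in degrees $\le 2$. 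On the other hand, properness of $\G$ yields a normalized Haar system, and integration over the $\s$-fibers against it is a contracting homotopy for the differentiable complex in positive degrees (the vanishing theorem of \cite{Cra}), so $H^k_d(\G)=0$ for all $k\ge 1$. Hence $H^1_\pi(V)=H^2_\pi(V)=0$, which is (ii); taking the direct limit over the cofinal family of such $V$ provided by (i) gives $H^2_\pi(M,x_0)=\varinjlim_V H^2_\pi(V)=0$.

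It remains to prove (i). Let $\G_0\tto W$ be the given Hausdorff integration of a neighborhood $W$ of $x_0$ with $1$-connected $\s$-fibers. Since $x_0$ is a fixed point, its orbit is $\{x_0\}$, so $\s^{-1}(x_0)=\t^{-1}(x_0)$ equals the isotropy group $G_{x_0}$; being an $\s$-fiber it is $1$-connected, and its Lie algebra is $\gg_{x_0}$, so $G_{x_0}$ is \emph{the} $1$-connected integration of $\gg_{x_0}$. In the situation relevant to Conn's theorem $\gg_{x_0}$ is semisimple of compact type, so $G_{x_0}$ is compact; and a compact $1$-connected Lie group has $\pi_1=\pi_2=0$, hence $H_1(G_{x_0};\Rr)=H_2(G_{x_0};\Rr)=0$, i.e.\ $\s^{-1}(x_0)$ is homologically $2$-connected. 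I would then use the compactness of $G_{x_0}$ to localize $\G_0$ about the fixed point: for $V\ni x_0$ small enough the restriction $\G:=\s^{-1}(V)\cap\t^{-1}(V)$ is again a Hausdorff Lie groupoid integrating $T^*_\pi V$ and is \emph{proper}, and an Ehresmann argument then identifies each of its $\s$-fibers with the compact fiber $\s^{-1}(x_0)\cong G_{x_0}$, so all of them are homologically $2$-connected; this gives (i). (Combined with Step 1, this already reduces Conn's theorem to constructing \emph{some} such integration near a fixed point --- the content of Steps 3--4.)

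The hard part is precisely this localization. Restricting a Lie groupoid to a small neighborhood of a fixed point is not automatically proper --- arrows can escape to infinity while their source and target return near $x_0$ (as happens already for the scaling action groupoid $\Rr\ltimes\Rr$ at its fixed point) --- and the $\s$-fibers of a restriction are merely open subsets of the $1$-connected $\s$-fibers of $\G_0$, so they need not remain $1$-connected. What saves the argument is the compactness of $G_{x_0}=\s^{-1}(x_0)$: it forces any sequence of such stray arrows to subconverge, necessarily to a point of $G_{x_0}$, and it allows one to average over $G_{x_0}$ and bring $\G_0$ into linear (action-groupoid) form near the fixed point --- in effect a Lie-groupoid slice theorem at a fixed point with compact isotropy. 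I expect to spend most of the effort establishing this; once it is available, the rest is a formal consequence of \cite{Cra}.
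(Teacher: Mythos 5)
Your treatment of part (ii) and of the passage to the limit is exactly the paper's: identify $H^{\bullet}_{\pi}(V)$ with the Lie algebroid cohomology of $T^*V$, apply the Van Est isomorphism in degrees $\le 2$ using homological $2$-connectedness of the $s$-fibers, and kill $H^{1}_{\text{diff}}(\G)$ and $H^{2}_{\text{diff}}(\G)$ by averaging over the proper groupoid. The genuine gap is in part (i), which you yourself flag as ``the hard part'' and then leave unproved: you propose to obtain properness of the restricted groupoid and the identification of all its $s$-fibers with $G_{x_0}$ from a slice/linearization theorem for Lie groupoids at a fixed point with compact isotropy. Such a theorem exists (Weinstein, Zung), but it is itself a hard analytic result requiring estimates --- the paper's historical appendix explicitly notes that invoking it would yield only a ``geometric-analytic'' proof --- and nothing in your sketch establishes it. As written, the central claim of (i) is asserted, not proved.

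The missing idea is much softer, and it dissolves precisely the two difficulties you raise (stray arrows escaping the restriction, and $s$-fibers of a restriction being only open pieces of the original $1$-connected fibers): do not restrict $\G_0$ to an arbitrary small neighborhood, but to a small \emph{saturated} one. Since $s^{-1}(x_0)\cong G$ is a compact $1$-connected fiber of the submersion $s$, Reeb stability applied to the foliation by $s$-fibers gives a neighborhood $V_0$ of $x_0$ with $s^{-1}(V_0)\cong V_0\times G$. Set $V:=t(s^{-1}(V_0))$, the saturation of $V_0$; it is open because $t$ is a submersion, and arbitrarily small because $t$ maps the compact set $s^{-1}(x_0)$ to the single point $x_0$, so for $V_0$ small the tube $s^{-1}(V_0)$ sits inside $t^{-1}(W)$ for any prescribed neighborhood $W$ of $x_0$. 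Saturation forces $s^{-1}(V)=t^{-1}(V)$, so the restriction $\G_V=s^{-1}(V)$ has as $s$-fibers the \emph{entire} $s$-fibers of $\G_0$ over points of $V$; by right translations each is diffeomorphic to some $s^{-1}(x)$ with $x\in V_0$, hence to $G$, which is compact, $1$-connected, and homologically $2$-connected (Hurewicz plus $\pi_2(G)=0$, or the Hopf description of $H_{*}(G;\Rr)$). Properness of $\G_V$ is then immediate from compactness and local triviality of the $s$-fibers. With this construction in place of your deferred slice theorem, the rest of your argument closes.
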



The geometric object behind the Poisson brackets which provides the
bridge between Poisson geometry and Lie-group type techniques is the
\textbf{cotangent Lie algebroid} $A= T^*M$ and the associated groupoid
$\G(A)$ (see \cite{CrFe1,CrFe2}). For a Poisson manifold $M$ we will denote by
$\Sigma(M, \pi)=\G(T^*M)$ its associated groupoid. We recall that
$\Sigma(M)$ is defined as the set of cotangent paths in $M$ modulo
cotangent homotopies, and that it is a \emph{topological groupoid} with
1-simply connected s-fibers. A Poisson manifold $M$ is said to be 
\textbf{integrable} if the associated Lie algebroid $T^*M$ is
integrable. This happens iff $\Sigma(M, \pi)$ is a \emph{Lie groupoid}. In this
case, $\Sigma(M, \pi)$ carries a natural symplectic structure, that makes
it into a \textbf{symplectic groupoid}.

\begin{proof}[Proof of Proposition \ref{prop:integrable}] Let's assume that $U$ is a neighborhood of
$x_0$ which is integrable by an Hausdorff Lie groupoid $\G\tto U$. The
fiber of the source map $s:\G\to U$ above $x_0$ is a Lie group
integrating $\gg_{x_0}$, so it is compact and 1-connected. Hence, by
Reeb stability, there exists a neighborhood $V_0$ of $x_0$ such that
$s^{-1}(V_0)$ is diffeomorphic to the product $V_0\times G$. If we let
$V= t(s^{-1}(V_0))\subset U$ be the saturation of $V_0$, the
restriction $\G_{V}$ of $\G$ to $V$ will be a groupoid whose source map
has compact, 1-connected, fibers: using right translations, each fiber
will be diffeomorphic to $s^{-1}(x_0)\simeq G$. Moreover,
a compact Lie group has the same rational homology type as a product
of odd dimensional spheres, so $G$ is automatically homological
$2$-connected, so the $s$-fibers are also homological
$2$-connected.

The proof of the second part is a combination of two classical results on Lie groups
which have been extended to Lie groupoids. 
The first result states that the differentiable cohomology (defined
using groups cocycles which are smooth) vanishes for compact groups,
and this follows immediately by averaging. This result immediately
extends to groupoids, i.e. $H^{*}_{\text{diff}}(\G)=0$ for
any proper groupoid $\G$ (\cite{Cra}, Proposition 1).

The second result is the Van Est isomorphism. As explained in
\cite{Cra}, differentiable group(oid) cocycles can be differentiated
and they give rise to Lie algebra(oid) cocycles. The resulting map
$\Phi: H^{k}_{\text{diff}}(\G)\to H^{k}(A)$, called also the Van Est map, is
an isomorphism for degree $k\le n$ provided the $s$-fibers of $\G$ are
homological $n$-connected (\cite{Cra}, Theorem 3). Again, the proof is just an
extension of the classical proof of Van Est.

If we apply these two results to our groupoid $\G\tto V$, the second part of the proposition follows
since the Poisson cohomology of $V$
coincides with the Lie algebroid cohomology of $A=T^*V$.
\end{proof}

\section*{Step 3: Reduction to the existence of symplectic realizations}         %

In the previous step, we have reduced the proof of Conn's Theorem to integrability around a fixed point.
The integrability of a Poisson manifold $(M,\pi)$ is strongly related to the existence
of symplectic realizations. 

Recall that a symplectic realization of $(M, \pi)$ consists of
a symplectic manifold $S$ together with a Poisson map $\nu: S\to M$ which is 
a surjective submersion. One calls it complete if for any complete Hamiltonian vector field $X_{f}$ 
on $M$, the vector field $X_{\nu^*(f)}$ is complete. It is known that the existence of 
complete symplectic realizations is equivalent to integrability (Theorem 8 in \cite{CrFe2}),
but that depends on subtleties regarding the (required) Hausdorffness conditions on $S$
which are not relevant for us since we are interested on Hausdorff Lie groupoids.
Instead, in this paper we do require $S$ to be Hausdorff and we extract from \cite{CrFe2} the following result. In the statement we use the following conventions: for a symplectic realization $\nu: S\to M$ we denote by $\mathcal{F}(\nu)$ the foliation of $S$ by the (connected components of the) fibers of $\nu$, and $\mathcal{F}(\nu)^{\perp}$ is its symplectic orthogonal. Also, we recall that a foliation is simple if it is induced by a submersion.

\begin{theorem} 
\label{thm:integr:real}
A Poisson manifold $(M, \pi)$ is integrable by a Hausdorff Lie groupoid with 
1-connected s-fibers if and only if it admits a complete symplectic realization
$\nu: S\to M$ with the property that the foliation $\mathcal{F}(\nu)^{\perp}$
is simple and has simply-connected leaves. 
\end{theorem}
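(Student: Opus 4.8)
The plan is to prove the two implications separately. The forward one is essentially a bookkeeping exercise: one takes for $S$ the canonical symplectic groupoid itself. The converse is the substantial direction, and there the plan is to invoke the path--space construction of \cite{CrFe2}.

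\emph{From a groupoid to a realization.} Suppose first that $(M,\pi)$ is integrable by a Hausdorff Lie groupoid $\G\tto M$ with $1$-connected $s$-fibers. Since its $s$-fibers are $1$-connected, $\G$ must be isomorphic to the canonical groupoid $\Sigma(M,\pi)$, which therefore carries a multiplicative symplectic form $\omega$. I would then take $\nu=t\colon\Sigma(M,\pi)\to M$ to be the target map, which (with the conventions of \cite{CrFe2}) is a symplectic realization; it is Hausdorff because $\Sigma(M,\pi)\cong\G$ is, and it is complete because the lift along the target map of a complete Hamiltonian vector field on $M$ is complete (its flow is given by translations in $\Sigma(M,\pi)$). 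Multiplicativity of $\omega$ yields the standard identity $(\ker dt)^{\perp_{\omega}}=\ker ds$, so that $\mathcal{F}(\nu)^{\perp}=\mathcal{F}(s)$ is the foliation of $\Sigma(M,\pi)$ by $s$-fibers. This foliation is simple, being cut out by the submersion $s$, and since the $s$-fibers are $1$-connected its leaves are precisely the $s$-fibers and hence are simply connected. This gives the realization required by the statement.

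\emph{From a realization to a groupoid.} Conversely, let $\nu\colon S\to M$ be a complete symplectic realization with $S$ Hausdorff and with $\mathcal{F}(\nu)^{\perp}$ simple and having simply-connected leaves; write $p\colon S\to N:=S/\mathcal{F}(\nu)^{\perp}$ for the defining submersion. Since a complete symplectic realization exists, $(M,\pi)$ is integrable by Theorem~8 of \cite{CrFe2}, so $\Sigma(M,\pi)$ is a Lie groupoid with $1$-connected $s$-fibers and it only remains to check that it is \emph{Hausdorff}. For this I would use that the realization comes with an action of $\Sigma(M,\pi)$ on $S$ along $\nu$, whose infinitesimal generators are the Hamiltonian vector fields $X_{\nu^{*}f}$; these span exactly $T\mathcal{F}(\nu)^{\perp}$, so the $\Sigma(M,\pi)$-orbits in $S$ are the leaves of $\mathcal{F}(\nu)^{\perp}$, that is, the fibers of $p$. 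Completeness of the realization is what allows the infinitesimal action to be integrated to this global action. Because $\nu$ is a submersion the anchor $\nu^{*}T^{*}M\to TS$ is injective, so the action is locally free; together with the leaves being connected and simply connected this forces the isotropy groups to be trivial (a short argument with the homotopy exact sequence of an orbit map), i.e.\ the action is free. Hence $S$ is a principal $\Sigma(M,\pi)$-bundle over $N$, and $\Sigma(M,\pi)$ is recovered as the corresponding gauge groupoid, namely the quotient of the Hausdorff manifold $S\times_{N}S$ by the free action of the pair groupoid $S\times_{M}S$ of $\nu$. Since $S$ and $M$ are Hausdorff this pair groupoid is proper, so the quotient is a Hausdorff manifold; the symplectic form of $S$ descends to make $\Sigma(M,\pi)$ a symplectic groupoid, and its $s$-fibers are $1$-connected by construction.

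I expect the main obstacle to be exactly this second implication, and within it the point that the two topological hypotheses on $\mathcal{F}(\nu)^{\perp}$ must be shown to encode the right information: \emph{simplicity} is what turns the gauge groupoid into a \emph{Hausdorff} manifold (rather than the possibly non-Hausdorff integration permitted by the general equivalence of \cite{CrFe2}), while \emph{simple-connectedness of the leaves} is what makes the $\Sigma(M,\pi)$-action free and identifies the quotient with the canonical groupoid; completeness enters essentially in passing from the infinitesimal to the global action. All the needed ingredients are contained in \cite{CrFe2}; the role of the theorem is to assemble them with the Hausdorffness hypotheses that will be used in Step~4.
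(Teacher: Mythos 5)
Your proposal is correct and follows essentially the same route as the paper: the forward direction takes the source/target map of the canonical $1$-connected integration as the realization, and the converse invokes Theorem 8 of \cite{CrFe2} for smoothness of $\Sigma(M,\pi)$ and then uses the induced $\Sigma(M,\pi)$-action on $S$, whose action groupoid is identified (via simplicity and simple-connectedness of the leaves of $\mathcal{F}(\nu)^{\perp}$) with the Hausdorff manifold $S\times_{N}S$, to deduce Hausdorffness of $\Sigma(M,\pi)$. Your repackaging of this last step in gauge-groupoid language is only a cosmetic variation on the paper's identification $\Sigma\times_{M}S\cong\G(\mathcal{F}^{\perp})=S\times_{B}S$.
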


\begin{proof} 
One direction is clear: the source map of a Lie groupoid as in the
statement provides the desired symplectic integration (the symplectic orthogonals of the
s-fibers are the t-fibers).  Assume now that $\nu: S\to M$ is a symplectic integration as 
in the statement. Theorem 8 in \cite{CrFe2} insures that 
$\Sigma= \Sigma(M, \pi)$ is smooth (but possibly non-Hausdorff). A simple remark on the proof
of the cited theorem implies that, under our hypothesis,  $\Sigma$ is actually Hausdorff. 
Recall the main steps of the proof : 
the assignment $X_{f}\mapsto X_{\nu^*(f)}$ induces an action of the
Lie algebroid $T^*M$ on $S$ which integrates to an action of the Lie groupoid $\Sigma$ on $S$
and that the associated action groupoid is homeomorphic to the monodromy groupoid of $\mathcal{F}(\nu)^{\perp}$,
which we denote by $\G(\mathcal{F}^{\perp})$. In other words, we have
\[ \Sigma\times_{M}S \cong \G(\mathcal{F}^{\perp}).\]
where the fibered product is over $s$ and $\nu$. Since the right hand side is smooth, it follows easily \cite{CrFe2} that
$\Sigma$ is smooth as well and the previous homeomorphism is a diffeomorphism. Finally, note that $\mathcal{F}^{\perp}$ is 
induced by a submersion $\pi: S\to B$, for some manifold $B$, and that its leaves are simply connected. Therefore, we see
that $\G(\mathcal{F}^{\perp})= S\times_{B} S$ is Hausdorff. We conclude that $\Sigma$ is Hausdorff as well. 
\end{proof}

\begin{remark} 
The proof actually shows that the conditions on $\mathcal{F}^{\perp}$ can be replaced
by the fact that it has no vanishing cycles.
\end{remark}

The following corollary reduces the proof of Conn's Theorem to the existence of symplectic realizations around a fixed point:

\begin{corollary}
\label{realizations} 
Let $(M, \pi)$ be a Poisson manifold, $x_0\in M$ a fixed point and assume that
a neighborhood $U$ of $x_0$ admits a symplectic realization $\nu: S\to U$ with the property that $\nu^{-1}(x_0)$ 
is simply connected and compact. Then there exists a neighborhood of $x_0$ which is integrable by a Hausdorff Lie groupoid with 1-connected s-fibers.
\end{corollary}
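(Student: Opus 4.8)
The plan is to shrink the given symplectic realization $\nu\colon S\to U$ until it satisfies the hypotheses of Theorem \ref{thm:integr:real}, and then invoke that theorem. Write $\mathcal{F}^{\perp}=\mathcal{F}(\nu)^{\perp}$. Since $\nu$ is a Poisson submersion its fibres are coisotropic, so $\mathcal{F}^{\perp}$ is a regular foliation tangent to the fibres of $\nu$; in particular $\nu$ is constant along the leaves of $\mathcal{F}^{\perp}$. Because $x_0$ is a fixed point its symplectic leaf is $\{x_0\}$, so $L_0:=\nu^{-1}(x_0)$ is a union of leaves of $\mathcal{F}^{\perp}$, and by hypothesis it is compact and simply connected. (In the case of interest the fibres of $\nu$ are Lagrangian and $L_0$ is a single leaf; I will write the argument in that case, the general one being the same after replacing ``the leaf $L_0$'' by ``the leaves contained in $L_0$'', which are again compact with trivial holonomy.)

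First I would make $\mathcal{F}^{\perp}$ simple with simply connected leaves near $L_0$. Being compact and simply connected, $L_0$ is a leaf of $\mathcal{F}^{\perp}$ with trivial holonomy, so the global Reeb stability theorem furnishes a saturated open neighbourhood $S_0\subseteq S$ of $L_0$ on which $\mathcal{F}^{\perp}$ is simple --- induced by a submersion $q\colon S_0\to B$ which is a fibre bundle with compact fibre $L_0$ --- every leaf being diffeomorphic to $L_0$, hence compact and simply connected. As $\nu$ is an open map, $U_0:=\nu(S_0)$ is an open neighbourhood of $x_0$, and $\nu_0:=\nu|_{S_0}\colon S_0\to U_0$ is again a symplectic realization, now with $\mathcal{F}(\nu_0)^{\perp}=\mathcal{F}^{\perp}|_{S_0}$ simple and with simply connected leaves, and with $\nu_0^{-1}(x_0)=L_0$.

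Next I would make $\nu_0$ complete. Since $\nu_0$ is constant on the leaves of $\mathcal{F}^{\perp}$, it factors as $\nu_0=\bar{\nu}\circ q$ for a submersion $\bar{\nu}\colon B\to U_0$ with $\bar{\nu}^{-1}(x_0)=q(L_0)$ compact. A submersion with a compact fibre over a point is, over a neighbourhood of that point, a proper fibre bundle, so after shrinking $B$ to a small enough neighbourhood of $q(L_0)$ --- and $S_0$ to its $q$-preimage --- we may assume $\bar{\nu}$ is proper onto an open neighbourhood of $x_0$; then $\nu_0=\bar{\nu}\circ q$ is proper, being a composition of proper maps, while simplicity of $\mathcal{F}^{\perp}|_{S_0}$, simple-connectedness of its leaves, and the fact that $U_0$ is a neighbourhood of $x_0$ all persist. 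Finally, a proper symplectic realization is automatically complete: if $X_f$ is complete on $U_0$ with flow $\phi^{t}$ and $\gamma$ is a maximal integral curve of $X_{\nu_0^{*}f}$, then $\nu$ being Poisson forces $\nu_0(\gamma(t))=\phi^{t}(\nu_0(\gamma(0)))$, so over any finite time interval $\gamma$ is confined to $\nu_0^{-1}$ of a compact set and hence cannot escape --- so $\gamma$ is defined for all $t$. Now $\nu_0\colon S_0\to U_0$ is a complete symplectic realization whose $\mathcal{F}(\nu_0)^{\perp}$ is simple with simply connected leaves, and Theorem \ref{thm:integr:real} exhibits a Hausdorff Lie groupoid with $1$-connected $s$-fibres integrating the neighbourhood $U_0$ of $x_0$.

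The one genuinely delicate point is performing the two shrinkings so that all four requirements --- $\mathcal{F}^{\perp}$ simple, its leaves simply connected, $\nu_0$ proper, and $\nu_0(S_0)$ a neighbourhood of $x_0$ --- hold at once. The Reeb normal form $S_0\cong L_0\times B$ with $\nu_0$ factoring through the projection to $B$ is what makes this tractable: it reduces everything to the elementary statement that a submersion $\bar{\nu}\colon B\to U_0$ sending a marked point to $x_0$ restricts, over a small ball around that point, to a proper surjection onto an open neighbourhood of $x_0$; pulling this back through $q$ supplies all four properties simultaneously.
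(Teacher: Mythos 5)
Your first step is exactly the paper's argument: since $x_0$ is a zero of $\pi$, the fibre $L_0=\nu^{-1}(x_0)$ is Lagrangian, hence a compact, simply connected leaf of $\mathcal{F}(\nu)^{\perp}$ with trivial holonomy; Reeb stability then gives a saturated neighbourhood $S_0$ on which $\mathcal{F}^{\perp}$ is simple with compact, simply connected leaves, and $\nu|_{S_0}$ is a symplectic realization of the open set $\nu(S_0)\ni x_0$ to which Theorem \ref{thm:integr:real} can be applied. Up to that point the proposal is correct and coincides with the paper.

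The genuine gap is the premise on which your entire completeness argument rests: the fibres of a symplectic realization are \emph{not} coisotropic, and $\mathcal{F}(\nu)^{\perp}$ is \emph{not} tangent to the fibres of $\nu$. At $s\in S$ one has $T_s\mathcal{F}^{\perp}=\pi_S^{\sharp}\bigl((\ker d_s\nu)^{\circ}\bigr)=\mathrm{span}\{X_{\nu^*f}(s)\}$, and since $\nu$ is Poisson, $d\nu(X_{\nu^*f})=X_f$; thus a leaf of $\mathcal{F}^{\perp}$ is mapped by $\nu$ onto (a piece of) the symplectic leaf of $M$ through $\nu(s)$, not to a point. (This is already visible in the proof of Theorem \ref{thm:integr:real}: for a symplectic groupoid the orthogonals of the $s$-fibres are the $t$-fibres, on which $s$ is certainly not constant; the trivial case $\nu=\mathrm{id}$ on a symplectic manifold, whose fibres are points, also shows the claim is false.) The inclusion $T\mathcal{F}^{\perp}\subseteq\ker d\nu$ holds only over the zeros of $\pi$ --- which is precisely why $L_0$ is a leaf of $\mathcal{F}^{\perp}$, but no more than that. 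Consequently the factorization $\nu_0=\bar\nu\circ q$ through the leaf space $B$ of $\mathcal{F}^{\perp}$ does not exist, and the properness argument built on it, including the ``Reeb normal form with $\nu_0$ factoring through the projection to $B$'' of your final paragraph, collapses. The repair is much shorter than what you attempted: $X_{\nu^*f}$ is tangent to $\mathcal{F}^{\perp}$, the leaves of $\mathcal{F}^{\perp}$ inside the saturated neighbourhood $S_0$ are compact, and every integral curve stays inside its leaf; hence $X_{\nu^*f}$ is complete on $S_0$ for \emph{every} $f$, and the restricted realization is automatically complete. (The paper's proof of the corollary does not even mention completeness; this tangency-plus-compactness observation is the implicit justification.) Separately, your auxiliary claim that a submersion with one compact fibre is proper over a neighbourhood of that fibre is false in general, but it is not needed once the above is fixed.
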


\begin{proof} Note that $\nu^{-1}(x_0)$ is a Lagrangian submanifold of $S$. Therefore, $\nu^{-1}(x_0)$ is a compact, 1-connected, leaf of $\mathcal{F}^{\perp}(\nu)$. By Reeb stability, nearby leaves are compact, 1-connected and $\mathcal{F}^{\perp}(\nu)$ is simple. Hence we can apply Theorem \ref{thm:integr:real}.
\end{proof}

\section*{Step 4: Existence of symplectic realizations}%

The proof of Conn's Theorem can now be concluded by proving:

\begin{theorem} 
\label{thm:exist:real}
Let $(M,\pi)$ be a Poisson manifold, $x_0\in M$ a fixed 
point, and assume that the isotropy Lie algebra $\mathfrak{g}$ at $x_0$ is semi-simple of
compact type, with associated simply connected Lie group $G$. Then
there exists a symplectic realization $\nu: S\to U$ of some open neighborhood 
$U$ of $x_0$ such that $\nu^{-1}(x_0)= G$. 
\end{theorem}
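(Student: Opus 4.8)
The plan is to construct the symplectic realization $S$ as a ``localized'' version of the Weinstein groupoid $\Sigma(M,\pi)$ near the fiber over $x_0$, using the cotangent path space construction of \cite{CrFe1}. Concretely, I would model $S$ on the Banach manifold $P$ of cotangent paths $a: [0,1] \to T^*M$ whose base path stays in a small ball $U$ around $x_0$, divided by the equivalence relation of cotangent homotopy with fixed endpoints. Over the fixed point $x_0$, a cotangent path is exactly a path in the Lie algebra $\gg = \gg_{x_0}$ (since the anchor vanishes there), and cotangent homotopy classes of such paths form the $1$-connected Lie group $G$ integrating $\gg$ — this is the classical identification of the $A$-path space of a Lie algebra with its group. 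So the fiber $\nu^{-1}(x_0)$ will be $G$, which is compact since $\gg$ is semisimple of compact type.

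The key steps, in order: (1) Set up the path-space $P$ with its natural (weak) symplectic/presymplectic structure coming from the canonical symplectic form on $T^*M$, following \cite{CrFe1}; the foliation by cotangent-homotopy directions is the characteristic foliation, and the quotient $S = P/{\sim}$ carries a symplectic form whenever the quotient is smooth. (2) Show that near the $G$-fiber the quotient $S$ is a genuine finite-dimensional smooth symplectic manifold. The obstruction to smoothness, as identified in \cite{CrFe1}, is controlled by the monodromy groups $\NN_x$ of the algebroid; the point is that at $x_0$ the monodromy group is trivial (a compact semisimple Lie algebra has vanishing second homotopy of its group, or more directly the relevant monodromy group for a Lie algebra is zero), and monodromy groups are upper semicontinuous, so they stay trivial — or at least uniformly discrete — on a sufficiently small neighborhood $U$ of $x_0$. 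This is exactly where the compactness/semisimplicity hypothesis does its work, and where I expect the main difficulty to lie: one must verify that the computations of \cite{CrFe1} give not merely that each individual leaf is nice, but that the family is uniformly well-behaved near the closed fiber $G$, so that $S \to U$ is a submersion with the stated fiber. The auxiliary proposition on foliations promised in the first appendix is presumably the tool that upgrades ``pointwise good'' to ``good on a neighborhood.'' (3) Once $S$ is a smooth symplectic manifold, the source map $\nu: S \to U$, $[a] \mapsto (\text{base point of } a)(0)$, is a Poisson map and a surjective submersion, i.e.\ a symplectic realization, essentially by the general theory of \cite{CrFe1,CrFe2}; and by construction $\nu^{-1}(x_0) = G$.

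Let me make step (2) a little more precise. The space $\Sigma(M,\pi) = \G(T^*M)$ is always a topological groupoid with $1$-connected $s$-fibers, and it is smooth precisely when $T^*M$ is integrable. Rather than trying to integrate all of $(M,\pi)$, I localize: I take $U$ small enough that the only obstruction — the monodromy of the foliation by symplectic leaves, measured along loops in the leaf through $x_0$ — is controlled. Since the leaf through $x_0$ is the single point $\{x_0\}$ and the isotropy there is $\gg$ with $G$ compact and $\pi_2(G) = 0$ (true for any Lie group, and in the compact semisimple case $\pi_1(G) = 0$ as well since $G$ is taken simply connected), the ``second monodromy group'' $\NN_{x_0}$ is trivial. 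By the upper-semicontinuity of $x \mapsto \NN_x$ established in \cite{CrFe1}, there is a neighborhood on which the monodromy groups are trivial (or uniformly discrete away from $0$), which by the integrability criterion of \cite{CrFe1} makes the restriction of $\Sigma$ to $U$ a smooth Lie groupoid. Its source fiber over $x_0$ is the $1$-connected integration $G$ of $\gg$, hence compact. Taking $\nu = s\colon S := s^{-1}(U') \to U'$ for a suitable possibly-smaller $U'$, with the symplectic form restricted from the symplectic groupoid structure on $\Sigma$, gives the desired realization with $\nu^{-1}(x_0) \cong G$. The technical heart — and the step I expect to fight with — is making the neighborhood $U$ uniform: ensuring that smoothness of $S$ holds on a full neighborhood of the compact fiber $G \subset \Sigma$, not just leafwise, which is precisely what the foliation lemma in Appendix A should provide.
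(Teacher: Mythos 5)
Your setup is the right one (the cotangent path space of \cite{CrFe1}, and the identification of the fiber over $x_0$ with the $1$-connected compact group $G$), but the key step (2) contains a genuine gap. You propose to show that $\Sigma(M,\pi)$ restricted to a small $U$ is a smooth Lie groupoid by invoking ``upper semicontinuity of $x\mapsto \NN_x$'' to propagate triviality of the monodromy from $x_0$ to a neighborhood. No such semicontinuity statement is available in \cite{CrFe1}, and it is false in general: the integrability criterion there requires \emph{uniform} discreteness, i.e.\ that $\liminf_{y\to x} d\left(0,\NN_y\setminus\{0\}\right)>0$, and the quantity $d\left(0,\NN_y\setminus\{0\}\right)$ can collapse to $0$ along points approaching $x_0$ even when $\NN_{x_0}$ is trivial (the nearby isotropy algebras $\gg_y$ are not semisimple, and the nearby leaves are not points, so nothing a priori controls their monodromy). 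Establishing uniform discreteness near $x_0$ is essentially equivalent to proving integrability of a neighborhood outright, which is a \emph{conclusion} of the whole argument (it comes out of Step 3 via Reeb stability), not something one can get at the start. The paper explicitly flags this: ``achieving smoothness is difficult (and it would imply integrability directly).''

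The paper's actual route avoids this entirely by never forming the full quotient $\Sigma|_U$. It takes the source fiber $Y=s^{-1}(x_0)$ inside the cotangent path space $X$, which is saturated for the foliation $\F$ because $x_0$ is a zero of $\pi$, and notes that $Y\to Y/\F_Y=G$ is a locally trivial fibration with $1$-connected leaves over a compact base. The technical Proposition of Appendix 1 then produces a finite-dimensional transversal $T_X$ to $\F$, a retraction $r\colon T_X\to T_Y$, and an action of $\Hol_{T_Y}(\F_Y)$ on it, and the quotient $S=T_X/\Hol_{T_Y}(\F_Y)$ is smooth and Hausdorff. So one quotients only by the holonomy coming from the central fiber, not by all cotangent homotopies; the result is a symplectic realization $\nu\colon S\to U$ with $\nu^{-1}(x_0)=G$, but it is not a groupoid and its smoothness requires no monodromy estimates --- only the compactness of $G$ and the $1$-connectedness of the leaves of $\F_Y$. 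Integrability of a neighborhood is then deduced \emph{from} this realization in Step 3. Your description of the role of Appendix 1 (``upgrading pointwise good to good on a neighborhood'' for the monodromy) therefore misreads it: its job is to build the partial holonomy quotient, which is a different, and strictly weaker, thing to ask for than smoothness of $\Sigma|_U$.
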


We first recall some of the general properties of $\Sigma(M)$ (see \cite{CrFe1}). To construct it as a topological space
and possibly as a smooth manifold (in the integrable case), we consider the Banach manifold $P(T^*M)$ 
consisting of paths $a:I\to T^*M$ of class $C^2$, with
the topology of uniform convergence of a map together with its derivatives. Inside this Banach manifold
we have the space of cotangent paths:
\[ X:=\{a\in P(T^*M):\pi^\sharp(a(t))=\frac{\d}{\d t}p(a(t))\},\]
where $p:T^*M\to M$ is the bundle projection. Then $X$ is a submanifold of $P(T^*M)$ which carries 
a canonical foliation $\F$: two cotangent paths $a_0$ and $a_1$ belong to the same leaf if they are 
cotangent homotopic. This foliation has finite codimension and leaf space precisely $\Sigma(M)$.
Concatenation of paths, makes $\Sigma(M)$ into a topological groupoid which is smooth precisely 
when $M$ is integrable. 

The symplectic structure on $\Sigma(M)$ is a consequence of the following general property: the restriction of the canonical symplectic form of $P(T^*M)\simeq T^*P(M)$ to $X$ has kernel $\mathcal{F}$ and is invariant under the holonomy of $\mathcal{F}$. We conclude, also, that any transversal to $\mathcal{F}$ carries a symplectic structure invariant under the (induced) holonomy action. Therefore, the quotient of such a transversal by the holonomy action gives a symplectic manifold, provided the quotient is smooth. Unfortunately, achieving smoothness is difficult (and it would imply integrability directly). Instead, we will perform a quotient modulo only \emph{some} holonomy transformations, so that the result is smooth, and we will see that this is enough for our purposes.

\begin{proof}
First of all, we consider the source map $s:X\to M$ which sends a cotangent path $a(t)$ to its initial base point $p(a(0))$. This is a smooth submersion, and we look at the fiber $Y=s^{-1}(x_0)$. Since $x_0$ is a zero of $\pi$, $Y$ is saturated by leaves of $\F$ and we set $\F_Y=\F|_Y$. The quotient $G=Y/\F_Y$ is the 1-connected Lie group integrating the isotropy Lie algebra $\gg_{x_0}$, so it is compact. Moreover, note that we can canonically identify $Y$ with paths in the Lie group $G$ which start at the origin, so that the quotient map $Y\to Y/\F_Y=G$ sends a path to its end point. Also, two points in $Y$ belong to the same leaf of $\F_Y$ if the corresponding paths are homotopic relative to the end points. Since the first and second homotopy groups of $G$ vanish, the leaves of $\F_Y$ are 1-connected fibers of a 
locally trivial fibration $Y\to G$ with compact base, where local triviality is in the sense of 
Proposition \ref{technical} below (use right translations by contracting homotopies). By the proposition 
one can find:
\begin{enumerate}[(i)]
\item a transversal $T_X\subset X$ to the foliation $\mathcal{F}$ such that $T_Y:= Y\cap T_X$ is a complete transversal to $\mathcal{F}_Y$ (i.e., intersects each leaf of $\mathcal{F}_Y$ at least once).
\item a retraction $r: T_X\to T_Y$.
\item an action of the holonomy of $\mathcal{F}_Y$ on $r: T_X\to T_Y$ along $\mathcal{F}$.
\end{enumerate}
Moreover, the orbit space $S:=T_X/\Hol_{T_Y}(\mathcal{F}_Y)$
is a smooth (Hausdorff) manifold. Notice that the source map induces a map $\nu:S\to U$, where $U$ is an open neighborhood of $x_0$. Also, $\nu^{-1}(x_0)=Y/\F_Y=G$ is compact.
It follows that $S$ carries a symplectic form and that $\nu:S\to U$ is a Poisson map, so it satisfies all the properties in the statement of the theorem.
\end{proof}

\setcounter{section}{4}

\section*{Appendix 1: A technical result on foliations}    %

The aim of this section is to prove the following result which was used in the proof of Theorem \ref{thm:exist:real}.

\begin{proposition}
\label{technical} 
Let $\mathcal{F}$ be a foliation of finite codimension on a Banach manifold $X$ and let $Y\subset X$ be a submanifold which is saturated with respect to $\mathcal{F}$ (i.e., each leaf of $\mathcal{F}$ which hits $Y$ is contained in $Y$). Assume that:
\begin{itemize}
\item[(H1)] The foliation $\mathcal{F}_Y:= \mathcal{F}|_{Y}$ has simply connected leaves and is induced by a submersion $p: Y\to B$ into a compact manifold $B$.
\item[(H2)] The fibration $p: Y\to B$ is locally trivial, in the sense that its restriction to any contractible open set $U\subset B$ is trivial. 
\end{itemize}
Then one can find:
\begin{enumerate}[(i)]
\item a transversal $T_X\subset X$ to the foliation $\mathcal{F}$ such that $T_Y:= Y\cap T_X$ is a complete transversal to $\mathcal{F}_Y$ (i.e., intersects each leaf of $\mathcal{F}_Y$ at least once).
\item a retraction $r: T_X\to T_Y$.
\item an action of the holonomy of $\mathcal{F}_Y$ on $r: T_X\to T_Y$ along $\mathcal{F}$.
\end{enumerate}
Moreover, the orbit space $T_X/\Hol_{T_Y}(\mathcal{F}_Y)$ is
a smooth (Hausdorff) manifold.
\end{proposition}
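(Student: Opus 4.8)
The plan is to recast the statement as a clutching construction over the compact base $B$: build a finite atlas of local models in $Y$, thicken each of them inside $X$, and then glue the thickenings by $\mathcal{F}$-holonomy, the point being that simple-connectedness of the leaves of $\mathcal{F}_Y$ makes this gluing datum well defined and cocycle-closed while compactness of $B$ makes it finite and uniform. By (H1) and (H2), $p\colon Y\to B$ is a locally trivial fibration whose fibre is a simply connected leaf; since $B$ is compact, choose a finite cover $\{U_\alpha\}_{\alpha=1}^N$ of $B$ by contractible open sets with compact closure over which $p$ is trivial, together with sections $\sigma_\alpha\colon U_\alpha\to Y$, and set $\Sigma_\alpha:=\sigma_\alpha(U_\alpha)$ and $T_Y:=\bigsqcup_\alpha\Sigma_\alpha$. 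Each $\Sigma_\alpha$ is transverse to $\mathcal{F}_Y$ (being a local section of $p$) and the $\Sigma_\alpha$ together hit every leaf, so $T_Y$ is a complete transversal to $\mathcal{F}_Y$. For $b\in U_\alpha\cap U_\beta$ the leaf $p^{-1}(b)$ is connected and meets $\Sigma_\alpha,\Sigma_\beta$ in the single points $\sigma_\alpha(b),\sigma_\beta(b)$, and holonomy transport of $\mathcal{F}_Y$ along a path in $p^{-1}(b)$ between them gives a germ $h_{\alpha\beta}$, path-independent because $p^{-1}(b)$ is simply connected; these generate $\Hol_{T_Y}(\mathcal{F}_Y)$, whose orbit space is exactly $B$.

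Next I would thicken $T_Y$ inside $X$. Along $\Sigma_\alpha$ one has $TY=T\mathcal{F}_Y\oplus T\Sigma_\alpha$, and since $Y$ is $\mathcal{F}$-saturated $T\mathcal{F}|_{\Sigma_\alpha}=T\mathcal{F}_Y|_{\Sigma_\alpha}$; as $\mathcal{F}$ has finite codimension the normal bundle $\nu:=TX/T\mathcal{F}$ has finite rank and $T\Sigma_\alpha$ injects into it. Choose a subbundle $D_\alpha\subset TX|_{\Sigma_\alpha}$ complementary to $TY|_{\Sigma_\alpha}=T\mathcal{F}|_{\Sigma_\alpha}\oplus T\Sigma_\alpha$, fix a spray on $X$, and let $\Sigma_\alpha^X$ be the image under the exponential map of a small disk subbundle of $D_\alpha$ over $\Sigma_\alpha$. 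For the disk small enough (using compactness of $\overline{U_\alpha}$), $\Sigma_\alpha^X$ is an embedded finite-dimensional submanifold containing $\Sigma_\alpha$, transverse to $\mathcal{F}$ (its tangent space along $\Sigma_\alpha$ maps isomorphically onto $\nu$, and transversality is an open condition), and with $\Sigma_\alpha^X\cap Y=\Sigma_\alpha$ (it is transverse to $Y$ along $\Sigma_\alpha$). Then $T_X:=\bigsqcup_\alpha\Sigma_\alpha^X$ is a transversal to $\mathcal{F}$ with $T_X\cap Y=T_Y$, and the disk-bundle projections $\Sigma_\alpha^X\to\Sigma_\alpha$ assemble to a retraction $r\colon T_X\to T_Y$; this gives (i) and (ii).

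For (iii), holonomy transport of $\mathcal{F}$ along a path in $p^{-1}(b)$ from $\sigma_\alpha(b)$ to $\sigma_\beta(b)$ (for $b\in U_\alpha\cap U_\beta$) carries a neighbourhood of $\sigma_\alpha(b)$ in $\Sigma_\alpha^X$ onto one of $\sigma_\beta(b)$ in $\Sigma_\beta^X$; by simple-connectedness of $p^{-1}(b)$ this is well defined, and with the disk subbundles chosen uniformly small over the finite cover it patches into a diffeomorphism $\tilde h_{\alpha\beta}$ between the pieces of $T_X$ over $U_\alpha\cap U_\beta$ that restricts to $h_{\alpha\beta}$ on $T_Y$ and satisfies the cocycle identity $\tilde h_{\beta\gamma}\circ\tilde h_{\alpha\beta}=\tilde h_{\alpha\gamma}$ on triple overlaps. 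The pseudogroup generated by the $\tilde h_{\alpha\beta}$ is the desired action of $\Hol_{T_Y}(\mathcal{F}_Y)$ on $T_X$ along $\mathcal{F}$; if in the thickening step one arranges that, over each overlap, the disk fibres of $\Sigma_\beta^X$ are the holonomy transports of those of $\Sigma_\alpha^X$, then $\{\tilde h_{\alpha\beta}\}$ is a \emph{fibrewise} gluing datum over $B$ and $r$ is equivariant. The orbit space $T_X/\Hol_{T_Y}(\mathcal{F}_Y)$ is then obtained by clutching the finite-dimensional bundles $\Sigma_\alpha^X\to U_\alpha$ along the $\tilde h_{\alpha\beta}$, hence is a locally trivial bundle over the compact manifold $B$, in particular a smooth Hausdorff manifold.

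The main obstacle is exactly this patching: passing from \emph{germs} of holonomy transformations to maps defined on fixed neighbourhoods (here compactness of $B$ supplies the finiteness and uniformity), verifying the cocycle identity (which follows from path-independence of holonomy along the simply connected leaves), and, most delicately, choosing the thickenings $\Sigma_\alpha^X$, and hence $r$, coherently over the overlaps so that the lifted holonomy is a genuine action for which $r$ is equivariant. The two hypotheses of the proposition enter precisely at these points.
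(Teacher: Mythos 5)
Your overall architecture coincides with the paper's: a finite cover of the compact base $B$ by contractible opens carrying local sections of $p$, tubular thickenings of these sections inside transversals to $\mathcal{F}$, and gluing by leafwise holonomy, with simple-connectedness of the leaves guaranteeing path-independence of the holonomy germs. But the central step is asserted rather than proved. Holonomy along a path in a leaf only gives a \emph{germ} of a diffeomorphism at each point $\sigma_\alpha(b)$; your claim that ``with the disk subbundles chosen uniformly small over the finite cover it patches into a diffeomorphism $\tilde h_{\alpha\beta}$'' on the whole piece of $T_X$ over $U_\alpha\cap U_\beta$, and that the cocycle identity holds as an identity of maps (not merely of germs) on triple overlaps, is exactly what needs an argument: two maps with the same germ along a submanifold agree only on \emph{some} neighbourhood, which a priori shrinks as $b$ varies, so one must extract uniform radii, and moreover one must choose the thickenings coherently so that near $\Sigma_\beta$ the set $\Sigma_\beta^X$ genuinely \emph{is} the holonomy image of $\Sigma_\alpha^X$, simultaneously for all (double and triple) overlaps. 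You flag this as ``the main obstacle'' and then write ``if in the thickening step one arranges that\dots''---but that arrangement is the theorem.

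The paper devotes essentially all of its Appendix 1 to this point: an extension lemma for tubular neighbourhoods (Lemma \ref{lemma:partial:transversal}); a ``good open set'' induction producing holonomy-compatible partial tubular neighbourhoods with uniform radii over relatively compact subsets (Lemma \ref{lemma:homotopies:comptab}); and an induction over the charts that adds one section at a time, shrinking the opens ($\overline{V}_i\subset U_i$) and the radii ($R_i$, $R_{i,j}$) so that the new tubular neighbourhood is compatible with all previous ones and is carried by a single vector bundle over the union (Lemma \ref{lemma:extend:F:data}). It is there, in checking that the candidate gluings $\tilde{\sigma}_{k+1}^{(i)}$ and $\tilde{\sigma}_{k+1}^{(j)}$ agree, that simple-connectedness of the leaves is actually used, via the triviality of the holonomy of the concatenated loops $\sigma_{(k+1,j)}^u\circ\sigma_{(j,i)}^u\circ\sigma_{(i,k+1)}^u$. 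Your Hausdorffness argument (clutching a locally trivial bundle over $B$) is sound and is a concrete counterpart of the paper's Morita-equivalence argument, but it presupposes that the $\tilde h_{\alpha\beta}$ form a globally defined fibrewise cocycle, so it rests on the same missing step. In short: right strategy, correct identification of where each hypothesis enters, but the key patching/uniformization argument---the actual content of the proposition---is absent.
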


\begin{remark}
In the previous proposition, by an action of the holonomy of $\mathcal{F}_Y$ on $r: T_X\to T_Y$ we mean that
an action of the holonomy groupoid of $\mathcal{F}_Y$ restricted to $T_Y$, denoted $\Hol_{T_Y}(\mathcal{F}_Y)$, 
on the map $r: T_X\to T_Y$ (recall that groupoids act on smooth maps over the space of units). Also, when we say ``along the leaves of $\mathcal{F}$'' we mean that the orbits of the action lie inside the leaves of $\mathcal{F}$. 

In the situation described by the proposition, $\mathcal{F}_Y$ is simple and this can be made more explicit in the following way. The action is given by a smooth family of diffeomorphisms $h_{x,y}:r^{-1}(x)\to r^{-1}(y)$ defined for any $x,y\in T_Y$ with $p(x)=p(y)$, satisfying $h_{y,z}\circ h_{x,y}=h_{x,z}$ and $h_{x,x}=I$. Also, the action being along the leaves of $\mathcal{F}$ means that $h_{x,y}(u)$ and $u$ are in the same leaf of $\mathcal{F}$, for any $u\in r^{-1}(x)$.
\end{remark}

\begin{remark}
Note also that the proposition is essentially of a finite dimensional nature (it is about the transversal geometry of a foliation
of finite codimension). Actually, using the language of \'etale groupoids (see, e.g., \cite{MM}), one could state this result in
purely finite dimensional terms, as a particular case of a tubular neighborhood theorem in this context. For this kind of general statement,
to make precise the meaning of ``compactness in the transversal direction'' one should use the notion of compact generation introduced by Haefliger in 
\cite{Hae}).
\end{remark}

Let us turn then to the proof of Proposition \ref{technical}. 
We will consider cross-sections of the fibration $p: Y\to B$ whose fibers are the leaves of
$\mathcal{F}_Y$. A cross-section $\sigma: U\to Y$, defined over an open set $U\subset B$, can 
be identified with its image $\sigma(U)\subset Y$, which is a transversal to $\mathcal{F}_Y$.  
Note that, due to our hypothesis, if $U$ is contractible, then cross-sections over $U$ do exist. 

Given a cross-section $\sigma: U\to Y$, by a \textbf{transversal tubular neighborhood} of $\sigma$ 
we mean:
\[
\xymatrix{ 
E\,\ar[dr]_r\ar@<-2 pt>@{^{(}->}^{\tilde{\sigma}}[rr]&& X\\
&U\ar[ur]_{\sigma}
}
\]
where $r: E\to U$ is a vector bundle and $\tilde{\sigma}:E\to X$ is an embedding defining 
a tubular neighborhood of $\sigma(U)$ in some transversal $T$ to $\mathcal{F}$ containing $\sigma(U)$.
Hence $\tilde{\sigma}|_{U}= \sigma$ and $\tilde{\sigma}(E)$ is an open subset of $T$. We will assume that the vector bundle comes equipped with a norm $||~||$. The proof of existence of transversal tubular neighborhoods can be found in \cite{Hirsch}. Similarly, one can talk about a
\textbf{transversal partial tubular neighborhood} of $\sigma$ (\emph{loc.~cit.}~pp.~109): in this case $\tilde{\sigma}$ is only defined on an open neighborhood of the zero-section in $E$. Any such transversal partial neighborhood contains a transversal tubular neighborhood (\emph{loc.~cit.}). By abuse of notation we write $\tilde{\sigma}:E\to T$ for a transversal partial tubular neighborhood, even if it is only defined in a open neighborhood of the zero section in $E$. Also, we have the following extension property which follows from general properties of tubular neighborhoods (see, e.g., Exercise 3, pp. 118 in \cite{Hirsch}).

\begin{lemma}
\label{lemma:partial:transversal} 
Let $\sigma: U\to Y$ be a cross-section, $V$ and $W$ opens in $U$
such that $\overline{V}\subset W\subset \overline{W}\subset U$ (where the closures are in $B$). 
Let also $T$ be a transversal to $\mathcal{F}$ containing $\sigma(U)$. 
Assume that $\tilde{\sigma}_{W}: E_{W}\to T$ is a transversal tubular neighborhood
of $\sigma|_{W}$. Then there exists a transversal tubular neighborhood $\tilde{\sigma}: E\to T$
of $\sigma$, defined on some vector bundle $E$ over $U$, such that $E_W|_{V}= E|_{V}$ (as vector
bundles) and $\tilde{\sigma}= \tilde{\sigma}_{W}$ on $E|_{V}$.
\end{lemma}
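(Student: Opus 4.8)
The whole argument takes place inside the fixed transversal $T$, which is a \emph{finite-dimensional} manifold because $\mathcal{F}$ has finite codimension; inside it $\sigma(U)$ is an embedded submanifold, and a transversal tubular neighborhood of $\sigma$ is exactly an ordinary tubular neighborhood of $\sigma(U)$ in $T$ restricting to $\sigma$ on the zero section. Thus the statement is the relative (extension) form of the classical tubular neighborhood theorem, and the plan is to deduce it from the existence and uniqueness results of Hirsch \cite{Hirsch}.

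First I would normalize the data against the normal bundle $N$ of $\sigma(U)$ in $T$. Differentiating $\tilde{\sigma}_W$ along the zero section gives a bundle isomorphism $E_W\cong N|_W$; composing with its inverse I may assume that $\tilde{\sigma}_W$ has domain a neighborhood of the zero section of $N|_W$ and that its fiber derivative along $\sigma$ is the identity. By the existence half of the tubular neighborhood theorem there is also a tubular neighborhood $\tau:N\to T$ of $\sigma$ over all of $U$, normalized the same way, and (after shrinking the domain of $\tilde{\sigma}_W$) with $\tilde{\sigma}_W(N|_W)\subset\tau(N)$. I then take $E:=N$, so that the bundle identity $E|_V=E_W|_V$ holds under the above identification; it remains to interpolate $\tilde{\sigma}_W$ and $\tau$ into a single embedding equal to $\tilde{\sigma}_W$ over $V$ and to $\tau$ outside $W$.

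The interpolation is the heart of the matter and is carried out by the scaling (Alexander) trick. Set $h:=\tau^{-1}\circ\tilde{\sigma}_W$, a diffeomorphism of a neighborhood of the zero section of $N|_W$ onto its image, fixing the zero section with $dh=\mathrm{id}$ along it. The family $h_s(w):=s^{-1}h(sw)$, for $s\in(0,1]$, extends smoothly to $s=0$ by Hadamard's lemma, with $h_0=dh|_0=\mathrm{id}$. Choosing a cutoff $\lambda:U\to[0,1]$ with $\lambda\equiv 1$ on a neighborhood of $\overline{V}$ and $\mathrm{supp}\,\lambda\subset W$ — possible since $\overline{V}\subset W\subset\overline{W}\subset U$ — I define $\tilde{\sigma}(v):=\tau\big(h_{\lambda(b)}(v)\big)$ for $v\in N_b$, understood as $\tau(v)$ wherever $\lambda(b)=0$. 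Over $V$ one has $\lambda\equiv 1$, so $\tilde{\sigma}=\tau\circ h=\tilde{\sigma}_W$; outside $W$ one has $\lambda\equiv 0$, so $\tilde{\sigma}=\tau$; on the overlap the two prescriptions agree, and $\tilde{\sigma}$ is smooth, restricts to $\sigma$ on the zero section, and is an immersion there (its fiber derivative is $d\tau|_0=\mathrm{id}$).

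The one genuine obstacle is to ensure that $\tilde{\sigma}$ is an \emph{embedding} over the (a priori noncompact) base $U$. Since $\tilde{\sigma}$ is an immersion along the zero section, it is an embedding on some neighborhood of it; the task is to find a single neighborhood valid over all of $U$. Here I would use that the interpolation is nontrivial only over the relatively compact set $\overline{W}$: because $h$ fixes the zero section with identity derivative, $h_s(w)=w+O(|w|^2)$ uniformly on $\overline{W}$, so there is a uniform radius below which $\tilde{\sigma}$ stays within the domain of $\tau$ and remains an embedding, while outside $W$ it equals the embedding $\tau$. Restricting $E$ to this disc bundle, and recalling that a transversal partial tubular neighborhood always contains an honest one, then yields the desired $\tilde{\sigma}:E\to T$, a transversal tubular neighborhood of $\sigma$ with $E|_V=E_W|_V$ and $\tilde{\sigma}=\tilde{\sigma}_W$ on $E|_V$.
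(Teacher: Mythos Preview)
Your proposal is correct and is precisely the standard tubular-neighborhood extension argument that the paper is invoking: the paper does not give a proof of this lemma at all, but simply states that it ``follows from general properties of tubular neighborhoods (see, e.g., Exercise~3, pp.~118 in \cite{Hirsch}).'' Your write-up is essentially a solution to that exercise, carried out inside the finite-dimensional transversal $T$ via the Alexander scaling trick and a cutoff supported in $W$; the use of compactness of $\overline{W}\subset B$ to obtain a uniform radius for the embedding is exactly the point that makes the relative version go through.
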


A \textbf{homotopy of two cross-sections} $\sigma_0, \sigma_1:U\to Y$ defined over the same open set $U\subset B$ is a smooth family $\{\sigma_{t}:t\in [0, 1]\}$ of cross sections over $U$ connecting $\sigma_0$ and $\sigma_1$. Since the fibration $p: Y\to B$ is locally trivial it follows that any two cross-sections over a contractible open set are homotopic. 

Let $\sigma=\{\sigma_t\}$ be a homotopy between two cross-sections $\sigma_0,\sigma_1: U\to Y$.  Two transversal partial tubular neighborhoods $\tilde{\sigma}_{i}:E\to X$ of $\sigma_i$ ($i\in \{0, 1\}$) are said to be \textbf{$\sigma$-compatible} if the map
\[ \tilde{\sigma}_0(e)\stackrel{h}{\longmapsto} \tilde{\sigma}_1(e)\]
(defined for $e\in E$ in the intersection of the domains of $\tilde{\sigma_i}$) has the following properties:
\begin{enumerate}[(a)]
\item $x$ and $h(x)$ are in the same leaf of $\mathcal{F}$ for all $x$;
\item the germ of $h$ at each point $\sigma_0(u)$, where $u\in U$, coincides with the holonomy germ
of the foliation $\mathcal{F}$ along the path $t\mapsto \sigma_t(u)$.
\end{enumerate}

\begin{lemma}
\label{lemma:homotopies:comptab} Let $\sigma_0, \sigma_1: U\to Y$ be two cross-sections over an open $U\subset B$
connected by a homotopy $\sigma= \{\sigma_t\}$. Let $\tilde{\sigma}_{0}: E\to X$ be a transversal 
partial tubular neighborhood above $\sigma_0$ and
let $T$ be a transversal to $\mathcal{F}$ containing $\sigma_1(U)$. Then, for any
$V\subset B$ open with $\overline{V}\subset U$, there exists
\begin{enumerate}[(i)]
\item an open subset $F\subset E|_{V}$ containing $V$ (so that $\tilde{\sigma}_{0}|_{F}$ is a transversal partial
tubular neighborhood of $\sigma_0|_{V}$);
\item a transversal partial tubular neighborhood $\tilde{\sigma}_1: F\to T$ 
of $\sigma_1|_{V}$;
\end{enumerate} 
such that $\tilde{\sigma}_{0}|_{F}$ and $\tilde{\sigma}_1$ are $\sigma|_{V}= \{\sigma_t|_{V}\}$-compatible. 
\end{lemma}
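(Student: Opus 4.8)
The plan is to prove Lemma \ref{lemma:homotopies:comptab} by explicitly transporting the tubular neighborhood $\tilde\sigma_0$ along the homotopy $\{\sigma_t\}$ using the holonomy of $\mathcal F$, and then invoking the extension/replacement properties of transversal tubular neighborhoods recorded in Lemma \ref{lemma:partial:transversal}. Concretely, for each $u \in U$ the path $t \mapsto \sigma_t(u)$ lies in a single leaf of $\mathcal F_Y$ (here I use that the $\sigma_t$ are cross-sections of $p\colon Y \to B$, so they all hit the same fiber $p^{-1}(p(u))$, which is connected — indeed simply connected — by (H1)); hence it has a well-defined holonomy germ, transverse to $\mathcal F$, from a neighborhood of $\sigma_0(u)$ to a neighborhood of $\sigma_1(u)$. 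Since $\mathcal F$ has finite codimension and the holonomy germs depend smoothly on $u$ (the homotopy is smooth), these germs assemble into a smooth map $h$ defined on an open neighborhood of $\sigma_0(U)$ inside the transversal $\tilde\sigma_0(E)$, taking values near $\sigma_1(U)$ and sending each point into its own $\mathcal F$-leaf. This is the standard ``holonomy along a homotopy is a well-defined germ of diffeomorphism'' fact; I would cite the finite-codimension version or reprove it in one line from the Reeb/local-triviality structure.

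First I would fix $V$ with $\overline V \subset U$ and shrink: replace $E$ by a partial tubular neighborhood small enough that $h$ is defined on all of $\tilde\sigma_0(E|_{\overline V})$. The map $h$ then carries the transversal $\tilde\sigma_0(E|_V) \subset T'$ (where $T'$ is the transversal containing $\sigma_0(U)$) diffeomorphically onto a submanifold of $X$ which is again transverse to $\mathcal F$ and which contains $\sigma_1(V)$ and lies in the leaves issued from $\sigma_0(E|_V)$; by property (b) its germ along $\sigma_1(V)$ agrees with the holonomy germ along $t \mapsto \sigma_t(u)$. Now $h \circ \tilde\sigma_0|_{E|_V}$ is a transversal partial tubular neighborhood of $\sigma_1|_V$, but into the ``wrong'' transversal $h(T')$ rather than the prescribed $T$. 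Since $h(T')$ and $T$ both contain $\sigma_1(V)$ and are transverse to $\mathcal F$, the holonomy germ of $\mathcal F$ along constant paths gives a canonical germ of diffeomorphism between them fixing $\sigma_1(V)$ pointwise; composing with this germ (after a further shrinking of $E|_V$ to an open $F$ on which everything is defined) and applying the tubular-neighborhood extension Lemma \ref{lemma:partial:transversal} to realize the result as an honest partial tubular neighborhood $\tilde\sigma_1\colon F \to T$ of $\sigma_1|_V$, I obtain the claimed $F$, $\tilde\sigma_1$. By construction the comparison map $\tilde\sigma_0(e) \mapsto \tilde\sigma_1(e)$ is, up to the leafwise germ identifications just used, exactly $h$ restricted to $F$, so properties (a) and (b) in the definition of $\sigma|_V$-compatibility hold: (a) because $h$ moves points within $\mathcal F$-leaves and the transversal-change germs do too, and (b) because the germ of $h$ at $\sigma_0(u)$ was defined to be the holonomy germ along $t \mapsto \sigma_t(u)$ and the extra transversal-change germ at $\sigma_1(u)$ is trivial-to-first-order in the relevant sense (it is the identity on $\sigma_1(V)$ and its germ is the holonomy along a constant path).

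The main obstacle, and the place I would spend the most care, is the smooth dependence of the holonomy germs on the point $u$ and the corresponding domain control: the holonomy germ along $t \mapsto \sigma_t(u)$ is a priori only a germ, defined on an unspecified neighborhood of $\sigma_0(u)$, and I must argue that as $u$ ranges over the compact set $\overline V$ these germs can be realized on a single open set (here compactness of $\overline V \subset B$, and ultimately of $B$, is essential). The clean way to do this is to cover the image of the homotopy $[0,1] \times \overline V \to Y \subset X$ by finitely many foliation charts of $\mathcal F$ in $X$, write the holonomy of $t \mapsto \sigma_t(u)$ as a finite composite of the elementary transition maps of these charts (which depend smoothly on $u$ on their common domain), and take the common domain of the resulting composite; uniform continuity on the compact parameter space then yields a uniform lower bound on the size of the tube. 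Everything else — that composites and germ-identifications of transversal maps are again transversal, that shrinking a partial tubular neighborhood stays one, and the final invocation of Lemma \ref{lemma:partial:transversal} — is routine bookkeeping, so I would state it briefly and move on.
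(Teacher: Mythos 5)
Your proposal is correct and follows essentially the same route as the paper: define $\tilde{\sigma}_1$ by transporting $\tilde{\sigma}_0$ along the holonomy of $\mathcal{F}$ over the paths $t\mapsto\sigma_t(u)$ of the homotopy, and use compactness of $\overline{V}$ to realize these germs on a uniform open set $F$. The only difference is bookkeeping: where you assemble the holonomy globally from a finite chain of foliation charts with uniform domain control, the paper first produces the partial tubular neighborhood locally (on ``good'' open sets) and then glues over a finite cover by induction, the gluing being licensed by the agreement of the holonomy germs along the cross-section together with a uniform radius bound --- the content is the same.
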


\begin{proof}
Fix $\sigma_0$, $\sigma_1$, $\tilde{\sigma}_0: E\to X$ and $T$ as in the statement.
As a temporary terminology, we say that an open subset $V\subset U$ is \emph{good} if $\overline{V}\subset U$ and the conclusion of the lemma holds for $V$. An open subset of a good open set is also good.

We first show that any $u\in U$ admits a good open neighborhood. Consider the holonomy transformation along the path $\sigma^u(t):=\sigma(t,u)$ from the transversal $\tilde{\sigma}_0(E)$ to the transversal $T$. This is the germ of a diffeomorphism $h_u$, defined in some neighborhood of $\sigma_0(u)$, which can be taken of the form $\tilde{\sigma}_0(F)$ for some open set $F\subset E$ containing $u$. Choosing $F$ a small enough open ball (relative to $||~||$) and setting $\tilde{\sigma}_1:= h_u\circ\tilde{\sigma}_0|_{F}$, we conclude that $V$ is a good open set.

Let $V$ be an arbitrary open set with $\overline{V}\subset U$. We can find a cover of $\overline{V}$ by good open sets, so we can extract a finite good subcover $\{U_i: 1\leq i\leq p\}$ of $\overline{V}$. We prove by induction on $p$ that, if this happens, then $V$ must be a good open set. Obviously, the result holds if $p=1$. For the induction step, assume the assertion is true for $p-1$ and assume that $\overline{V}$ is covered by $p$ good open sets $U_i\subset U$. Choose another cover $\{V_i\}$ of $V$ with $\overline{V}_i\subset U_i$. Then, by the induction hypothesis, $U_1=V_1$ and $U_2:= V_2\cup \ldots \cup V_p$ will be good open sets. Moreover, $\overline{V}\subset U_1\cup U_2$, so all that remains to show is the case $p=2$.

Let $U_1,U_2\subset U$ be good opens sets and $\overline{V}\subset U_1\cup U_2\subset U$. We need to show that $V$ is a good open set. Let $F_i\subset E$, $\tilde{\sigma}_i: F_i\to T$ be the associated transversal partial tubular neighborhoods. Consider also the induced maps $h_i: \tilde{\sigma}_0(F_i)\to \tilde{\sigma}_i(F_i)$.  We consider two new open sets $V_i$ such that $\overline{V}\subset V_1\cup V_2$ and $\overline{V}_i\subset U_i$.
Compactness of $\overline{V}_i$ shows that we can find $R>0$ such that:
\[ x\in E|_{V_i}, ||x||< R \Longrightarrow x\in F_i.\]
Due to the properties of $h_i$ (properties (a) and (b) above), we see that $h_1$ and $h_2$ 
coincide in a neighborhood of $\sigma_0(u)$ in $\sigma_0(U)$. Hence, choosing
eventually a smaller $R$, we may assume that 
\[ x\in E|_{V_1\cap V_2}, ||x||< R \Longrightarrow \tilde{\sigma}_1(x)= \tilde{\sigma}_1(x).\]
It follows that $\tilde{\sigma}_1$ and $\tilde{\sigma}_2$ will glue on
\[ F= \{ x\in E_{V_1\cup V_2}: ||x||< R\}\]
and the resulting transversal partial tubular neighborhood will have the desired 
properties so that $V$ is a good open set.
\end{proof}

For the next lemma, we introduce the following notation. A \textbf{$\mathcal{F}$-data} is a tuple
\begin{equation}
\label{system} 
\{U_i,\sigma_i,\tilde{\sigma}_i,\sigma_{(i,j)},E : 1\leq i, j\leq k\}
\end{equation}
consisting of the following:
\begin{enumerate}[(a)]
\item $\{U_i: i=1,\dots,k\}$ is a family of open sets in $B$ and $E$ is a vector bundle over 
$U=U_1\cup\cdots\cup U_k$.
\item $\sigma_i: U_i\to Y$ are cross-sections and $\sigma_{(i,j)}$ are homotopies between $\sigma_i|_{U_i\cap U_j}$ and $\sigma_j|_{U_i\cap U_j}$.
\item $\tilde{\sigma}_i: E|_{U_i}\to X$ are
transversal tubular neighborhoods over $\sigma_i$ such that
the restrictions of $\tilde{\sigma}_i$ and $\tilde{\sigma}_j$ to
$E|_{U_i\cap U_j}$ are $\sigma_{i, j}$-compatible for all $i$ and $j$.
\end{enumerate}
Assume now that:
\begin{enumerate}[(i)]
\item $U_{k+1}\subset B$ is another open set, $\sigma_{k+1}: U_{k+1}\to Y$ is a cross-section
above $U_{k+1}$ and $T$ is a transversal to $\mathcal{F}$ containing $\sigma_{k+1}(U_{k+1})$.
\item for each $1\leq i\leq k$ we have a homotopy $\sigma_{(i,k+1)}$ between 
$\sigma_i|_{U_i\cap U_{k+1}}$ and $\sigma_{k+1}|_{U_i\cap U_{k+1}}$.
\end{enumerate}
Let $V_i\subset B$ be open sets with
\[ \overline{V}_i\subset U_i \ \ (1\leq i\leq k+1)\]
and set $V= V_1\cup \ldots V_{k}$, $V'= V\cup V_{k+1}$. Then:

\begin{lemma}
\label{lemma:extend:F:data} 
Under the above assumptions, one can find a vector bundle $E'$ over $V'$ together with an
embedding $\phi: E'|_{V}\hookrightarrow E|_{V}$ of bundles, as well as a map $\tilde{\sigma}_{k+1}^{'}: E'|_{V_{k+1}}\to  X$ which is a transversal tubular neighborhood of $\sigma_{k+1}|_{V_{k+1}}$ inside $T$
such that
\[ \{V_i, \sigma_{i}^{'},\tilde{\sigma}_{i}\circ\phi,\sigma_{(i,j)}|_{V_i\cap V_j}, E^{'}: 1\leq i, j\leq k+1\}\]
is a $\mathcal{F}$-data. 
\end{lemma}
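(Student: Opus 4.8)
The strategy is to build the one missing transversal tubular neighbourhood — that of $\sigma_{k+1}$ — by transporting the $\tilde\sigma_i$, $i\le k$, across the homotopies $\sigma_{(i,k+1)}$ using Lemma~\ref{lemma:homotopies:comptab}, gluing the resulting local pieces, and extending the result to $V_{k+1}$ using Lemma~\ref{lemma:partial:transversal}. To have room for all the intersections I would first fix, for $1\le i\le k+1$, opens $V_i\subset W_i\subset W_i'\subset U_i$ with $\overline{V_i}\subset W_i$, $\overline{W_i}\subset W_i'$ and $\overline{W_i'}\subset U_i$ (possible since $B$ is compact), and write $W=W_1\cup\dots\cup W_k$, $W'=W_1'\cup\dots\cup W_k'$. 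Then for each $i\le k$ I would apply Lemma~\ref{lemma:homotopies:comptab} to the homotopy $\sigma_{(i,k+1)}$, with $\tilde\sigma_0=\tilde\sigma_i$, with the given transversal $T$, and with the open set $W_i'\cap W_{k+1}'$ (whose closure lies in $U_i\cap U_{k+1}$); this produces an open $F_i\subset E|_{W_i'\cap W_{k+1}'}$ around the zero section and a transversal partial tubular neighbourhood $\tilde\tau_i\colon F_i\to T$ of $\sigma_{k+1}|_{W_i'\cap W_{k+1}'}$ which is $\sigma_{(i,k+1)}$-compatible with $\tilde\sigma_i$; concretely $\tilde\tau_i=h_i\circ\tilde\sigma_i$ near the zero section, where $h_i$ is the $\mathcal F$-holonomy germ along $t\mapsto\sigma_{(i,k+1)}(t,u)$.

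Next I would glue the $\tilde\tau_i$. The key point is that over a triple overlap the germs of $\tilde\tau_i$ and $\tilde\tau_j$ at $\sigma_{k+1}(u)$ coincide: $\sigma_{(i,j)}$-compatibility of $\tilde\sigma_i,\tilde\sigma_j$ gives $\tilde\tau_j=h_j\circ g_{ij}\circ\tilde\sigma_i$ near the zero section, where $g_{ij}$ is the holonomy germ along $t\mapsto\sigma_{(i,j)}(t,u)$, and $h_j\circ g_{ij}$ is the holonomy along the concatenation $\sigma_{(i,j)}^u*\sigma_{(j,k+1)}^u$. This path and $\sigma_{(i,k+1)}^u$ have the same endpoints and both lie in the leaf of $\mathcal F_Y$ through $\sigma_i(u)$ (a connected component of $p^{-1}(u)$), which is simply connected by (H1); hence they are homotopic rel endpoints inside a leaf of $\mathcal F$ (as $Y$ is $\mathcal F$-saturated) and therefore define the same $\mathcal F$-holonomy germ, i.e.\ $h_j\circ g_{ij}=h_i$ and $\tilde\tau_i=\tilde\tau_j$ near the zero section. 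A compactness argument of the same kind as the $p=2$ step in the proof of Lemma~\ref{lemma:homotopies:comptab} (using that the sets $\overline{W_i\cap W_j\cap W_{k+1}}$ are compact and contained in $W_i'\cap W_j'\cap W_{k+1}'$, and that the $F_i$ contain the zero section) then produces an $R>0$ for which the $\tilde\tau_i$ glue on $G=\{x\in E|_{W\cap W_{k+1}}:\|x\|<R\}$ to a smooth map $\tilde\tau\colon G\to T$ restricting to $\tilde\tau_i$ near the zero section over $W_i\cap W_{k+1}$. Choosing an open $O$ with $\overline{V\cap V_{k+1}}\subset O\subset\overline O\subset W\cap W_{k+1}$ and shrinking $R$, a routine sequential argument (using injectivity of each $\tilde\tau_i$ and the fact that $\sigma_{k+1}$ is an embedding) shows that $\tilde\tau$ restricted over $O$ is injective, hence an honest transversal partial tubular neighbourhood of $\sigma_{k+1}|_{O}$ inside $T$.

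Now I would extend: apply Lemma~\ref{lemma:partial:transversal} to the cross-section $\sigma_{k+1}|_{W_{k+1}'}$, with the transversal $T$, with ``$W$''$=O$ and ``$V$'' a neighbourhood of $\overline{V\cap V_{k+1}}$ with closure in $O$, feeding in $\tilde\tau$ (rescaled to be defined on the whole bundle $E|_O$) as the transversal tubular neighbourhood; the closure inclusions required by that lemma follow from $\overline O\subset W\cap W_{k+1}$ and $\overline{W_{k+1}}\subset W_{k+1}'$. This yields a bundle $\tilde E$ over $W_{k+1}'$ and a transversal tubular neighbourhood $\tilde\rho\colon\tilde E\to T$ of $\sigma_{k+1}|_{W_{k+1}'}$ with $\tilde E=E$ and $\tilde\rho=\tilde\tau$ over a neighbourhood of $\overline{V\cap V_{k+1}}$; restricting to $V_{k+1}$ produces $\tilde\sigma_{k+1}'$. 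Finally I would set $E'$ to be $E|_V$ and $\tilde E|_{V_{k+1}}$ glued along $V\cap V_{k+1}$ (where they coincide), take $\phi$ to be the identity inclusion $E'|_V=E|_V\hookrightarrow E|_V$, and check that $\{V_i,\sigma_i|_{V_i},\tilde\sigma_i\circ\phi\ (\text{resp.\ }\tilde\sigma_{k+1}'),\sigma_{(i,j)}|_{V_i\cap V_j},E'\}$ is an $\mathcal F$-data: for $i,j\le k$ every clause is a restriction of the given data, while for the new index the transition map $\tilde\sigma_i(e)\mapsto\tilde\sigma_{k+1}'(e)$ over $V_i\cap V_{k+1}$ coincides, near the zero section, with the map $\tilde\sigma_i(e)\mapsto\tilde\tau_i(e)$ produced by Lemma~\ref{lemma:homotopies:comptab} in the first step and is therefore $\sigma_{(i,k+1)}$-compatible.

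The main obstacle is the gluing step. Conceptually, this is exactly where hypothesis (H1) enters: simple connectivity of the leaves of $\mathcal F_Y$ is what forces the different holonomy transports $\tilde\tau_i$ to agree on overlaps, irrespective of the non-canonical choices of homotopies $\sigma_{(i,k+1)}$ and $\sigma_{(i,j)}$. Technically, promoting these germ-agreements to a genuine transversal tubular neighbourhood over a region containing $\overline{V\cap V_{k+1}}$ — with a uniform tube radius and the embedding property — is the delicate bookkeeping, and it is in essence a several-chart version of the compactness/gluing argument already carried out in the proof of Lemma~\ref{lemma:homotopies:comptab}.
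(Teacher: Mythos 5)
Your argument follows the same route as the paper's proof: transport each $\tilde\sigma_i$ across the homotopy $\sigma_{(i,k+1)}$ via Lemma~\ref{lemma:homotopies:comptab}, use simple connectivity of the leaves of $\mathcal{F}_Y$ to see that the holonomy of the concatenated loops $\sigma_{(k+1,j)}^u\circ\sigma_{(j,i)}^u\circ\sigma_{(i,k+1)}^u$ is trivial so the transported pieces agree near the zero section, invoke compactness of the shrunken overlaps to get a uniform radius $R$ on which they glue, and then extend over $V_{k+1}$ with Lemma~\ref{lemma:partial:transversal}; this is exactly the paper's proof, including the two nested layers of shrinkings. The one place your packaging differs is where the rescaling onto $\{\|x\|<R\}$ is placed: you absorb it into $\tilde\sigma_{k+1}'$ and take $\phi=\mathrm{id}$, but then the transition $\tilde\sigma_i(e)\mapsto\tilde\sigma_{k+1}'(e)=\tilde\tau_i(\rho(e))$ moves $e$ to $\rho(e)$ before applying the holonomy, so condition (a) of $\sigma$-compatibility ($x$ and $h(x)$ in the same leaf of $\mathcal{F}$) fails away from the zero section; the paper instead puts the rescaling diffeomorphism into the embedding $\phi:E\to E''$ and uses $\tilde\sigma_i\circ\phi$, which keeps both points on the same holonomy orbit --- a one-line fix to your assembly.
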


\begin{proof} 
For $1\leq i\leq k+1$, choose open sets $\overline{V}_i\subset V_{i}^{'}\subset \overline{V}_{i}^{'}\subset U_{i}$. We can apply Lemma \ref{lemma:homotopies:comptab} to:
\begin{itemize}
\item the restrictions of $\sigma_{i}$ and $\sigma_{k+1}$ to $U_i\cap U_{k+1}$ and the homotopy $\sigma_{(i,k+1)}$.
\item the transversal tubular neighborhood to $\sigma_{i}|_{U_i\cap U_{k+1}}$ which is the restriction 
of $\tilde{\sigma}_i$ to $E|_{U_i\cap U_{k+1}}$. 
\item the open set $V^{'}_{i}\cap V^{'}_{k+1}$ whose closure is inside $U_{i}\cap U_{k+1}$.
\end{itemize}
It gives a transversal tubular neighborhood of $\sigma_{k+1}|_{V_{i}^{'}\cap V_{k+1}^{'}}$, denoted
\[ \tilde{\sigma}_{k+1}^{(i)}: E_{i, k+1}^{'}\to T\]
defined on some $E_{i, k+1}^{'}\subset E|_{V_{i}^{'}\cap V_{k+1}^{'}}$, an open set containing $V_{i}^{'}\cap V_{k+1}^{'}$.

Now choose open sets $\overline{V}_i\subset V_{i}^{''}\subset \overline{V}_{i}^{''}\subset V_{i}^{'}$. 
Since the closure of $V_{i}^{''}\cap V_{k+1}^{''}$ is compact, we find $R_i>0$ such that
\[ x\in E|_{V_{i}^{''}\cap V_{k+1}^{''}}, ||x||< R_i\Longrightarrow x\in E_{i, k+1}^{'}.\]
Next, for each $1\leq i, j\leq k$, the restrictions of $\tilde{\sigma}_{k+1}^{(i)}$ and $\tilde{\sigma}_{k+1}^{(j)}$ to $E_{i, k+1}^{'}\cap E_{j, k+1}^{'}$ are transversal partial tubular neighborhoods above the same cross-section $\sigma_{k+1}|_{V_{i}^{'}\cap V_{j}^{'}}$. Moreover, they are
$\sigma$-compatible, where $\sigma$ is the concatenation of the homotopies $\sigma_{(i,k+1)}$,
$\sigma_{(j,i)}$ and $\sigma_{(k+1,j)}$. Since all paths $\sigma^u(-)= \sigma(u, -)$
induced by a homotopy $\sigma$ are inside leaves of $\mathcal{F}_Y$ and these leaves are assumed
to be simply connected, the holonomy germs induced by the closed loops $\sigma_{(k+1,j)}^u\circ\sigma_{(j,i)}^u\circ\sigma_{(i,k+1)}^u$ are trivial. We conclude that
\[ \{ x\in E_{i, k+1}^{'}\cap E_{j, k+1}^{'}: \tilde{\sigma}_{k+1}^{(i)}(x)= \tilde{\sigma}_{k+1}^{(j)}(x)\}\]
contains an open subset in $E_{i, k+1}^{'}\cap E_{j, k+1}^{'}$ containing $V_{i}^{'}\cap V_{j}^{'}\cap V_{k+1}^{'}$. Again, we can find constants $R_{i,j}$ such that:
\[ x\in E_{i,k+1}^{'}|_{V_{i}^{''}\cap V_{j}^{''}\cap V_{k+1}^{''}}\cap E_{j, k+1}^{'}|_{V_{i}^{''}\cap V_{j}^{''}\cap V_{k+1}^{''}}, ||x||< R_{i,j} \Longrightarrow \tilde{\sigma}_{k+1}^{(i)}(x)= \tilde{\sigma}_{k+1}^{(j)}(x).\]
Let us set:
\[ R= \textrm{min} \{R_i, R_{i, j}: 1\leq i, j\leq k\},\quad \ E^{''}= \{x\in E: ||x||< R\}.\]
The maps $\tilde{\sigma}_{k+1}^{i}$ glue together to give a smooth map defined on $E^{''}|_{V''\cap V_{k+1}^{''}}$ where
\[ V''= V_{1}^{''}\cup \ldots \cup V_{k}^{''} .\] 
Denote this map by $\tilde{\sigma}_{k+1}^{''}$. 
Consider now $\lambda: [0, \infty)\to [0, 1)$ be a diffeomorphism equal to the identity near $0$ and define the
embedding
\[ \phi : E\to E^{''}, h(x)= R\frac{\lambda(||x||)}{||x||} x.\]
Composing with this embedding, we obtain 
\[\tilde{\sigma}_{k+1}^{''}: E|_{V_{k+1}^{''}\cap V''}\to T\]
which is a transversal tubular neighborhood of $\sigma_{k+1}|_{V_{k+1}^{''}\cap V''}$. We can now apply Lemma \ref{lemma:partial:transversal} to find:
\begin{enumerate}[(i)]
\item a vector bundle $E_{k+1}$ over $V_{k+1}$ such that $E_{k+1}|_{V_{k+1}\cap V}=E|_{V_{k+1}\cap V}$.
\item a transversal tubular neighborhood $\tilde{\sigma}_{k+1}^{'}$ of
$\sigma_{k+1}|_{V_{k+1}}$ defined on the entire  $E_{k+1}$ and which 
coincides with $\tilde{\sigma}_{k+1}^{''}$ on $E|_{V_{k+1}\cap V}$.
\end{enumerate}
Finally, if we let $E'$ be the vector bundle over $V'= V\cup V_{k+1}$ obtained by gluing $E|_{V}$ (over $V$) and $E_{k+1}$ (over $V_{k+1}$), we have obtained the desired $\F$-data.
\end{proof}

\begin{proof}[Proof of Proposition \ref{technical}]
Let $\mathcal{U}^{(1)}= \{U_{1}^{(1)}, \ldots , U^{(1)}_{n}\}$ be a finite good cover of $B$ (since $B$ is compact, they exist; see \cite{Bott}). Since each $U^{(1)}_{i}$ is contractible, there are cross-sections $\sigma_i:U_i\to Y$. Since all intersections $U^{(1)}_{i}\cap U^{(1)}_{j}$ are contractible and the fibers of $p: Y\to B$ are 
contractible, there are homotopies $\sigma_{(i,j)}$ between $\sigma_i|_{U_i\cap U_j}$ and $\sigma_j|_{U_i\cap U_j}$.
Any cover can be refined by a finite good cover, so we may also assume that the image of each $\sigma_i$ is inside some transversal $T_i$ of $\mathcal{F}$. Finally, we choose new good covers $\mathcal{U}^{(k)}= \{U_{1}^{(k)}, \ldots , U^{(k)}_{n}\}$, $k=1,\dots,n$, with the property
\[ \overline{U}^{(k)}_{i}\subset U^{(k+1)}_{i}\quad (i,k=1,\dots,n).\]
We then apply inductively Lemma \ref{lemma:extend:F:data}: at each step one gets a vector bundle over
$U^{(k)}_{1}\cup \ldots \cup U^{(k)}_{k}$ and an $\mathcal{F}$-data. For $k=n$, we obtain a vector
bundle over $B$, a complete transversal to $\mathcal{F}_{Y}$ (the images of the $U^{(n)}_{i}$'s by the cross sections) and the transversal to $\mathcal{F}$ (the transversal tubular neighborhoods of the final $\mathcal{F}$-data). 

It only remains to show that $T_X/\Hol_{T_Y}(\mathcal{F}_Y)$ is Hausdorff manifold. This can be checked directly. Here we indicate a more conceptual argument which is based on general properties of groupoids and their representations (in the sense of spaces on which the groupoid act). For Morita equivalences, we refer to \cite{MM}.
First of all, representations can be transported along Morita equivalences and, provided the groupoids and the Morita equivalences used are Hausdorff, the Hausdorff property of representations is preserved by this transport. Secondly, since $\mathcal{F}_Y$ is induced by the submersion $p: Y\to B$, the groupoid $\Hol_{T_Y}(\mathcal{F}_Y)\tto T_Y$ is Morita equivalent to the trivial groupoid  $B\tto B$, via the bimodule $T_Y\stackrel{\text{id}}{\longleftarrow} T_Y \stackrel{p}{\longrightarrow} B$. Finally, one just remarks that under this equivalence, $T_X/\Hol_{T_Y}(\mathcal{F}_Y)$ is the representation of $B\tto B$ which corresponds to the representation $T_X$ of $\Hol_{T_Y}(\mathcal{F}_Y)$.
\end{proof}

\section*{Appendix 2: Historical remarks}

The study of the linearization problem for Poisson brackets was
initiated by Alan Weinstein in the foundational paper
\cite{Wein}. There, he states the problem and he shows that the
formal linearization problem can be reduced to a cohomology
obstruction. If the isotropy Lie algebra is semisimple this
obstruction vanishes. For analytic linearization he conjectured that,
provided the isotropy Lie algebra is semisimple, this can always be
achieved, a result later proved by Conn \cite{Conn1}.

In \cite{Wein}, Alan Weinstein also considers the smooth linearization
problem. He gives an example of a smooth, non-linearizable, Poisson
bracket with isotropy Lie algebra $\mathfrak{sl}(2,\Rr)$. The
situation is remarkable similar to the case of Lie algebra actions,
and this counter-example is analogous to the example of a
non-linearizable smooth action of $\mathfrak{sl}(2,\Rr)$, given by
Guillemin and Sternberg in \cite{GiSt}. By contrast, he suggests that
linearization when the isotropy is $\mathfrak{so}(3)$ could be proved
as follows (see \cite{Wein}, page 539):
\begin{quote}
\emph{
The first step would be to use the theorems of Reeb and Moussu
to ``linearize'' the foliation by symplectic areas. Next, a ``volume
preserving Morse lemma'' would be used to put in standard form the
function which measures the symplectic area of the leaves. Finally,
the deformation method of Moser and Weinstein would have to be applied
to each symplectic leaf, with care taken to assure regularity at the
origin.}
\end{quote}
The proof sketched was actually implemented by Dazord in \cite{Daz}.
Weinstein goes on to conjecture that smooth linearization can be achieved for compact semisimple isotropy. This again was proved to be so by Conn in \cite{Conn2}.

Conn's proof of smooth linearization is a highly non-trivial analytic
argument. He views the effect of changes of coordinates upon the Poisson
tensor as a non-linear partial differential operator. A combination of
Newton's method with smoothing operators, as devised by J.~Nash and
J.~Moser, is used to construct successive approximations to the desired
linearizing coordinates. The linearized equations that need to be
solved at each step are non-degenerate and overdetermined (the
operator differentiates only along the symplectic foliation). However,
by working at the level of Lie algebra cohomology of $\gg$ with
coefficients in the space of smooth functions on $\gg^*$, Conn is able
to find accurate solutions to the linearized equations. This involves
many estimates on the Sobolev norms, which are defined from the
 the Killing form, and so take advantage of its
invariance, non-degeneracy and definiteness.

After Conn's work was completed attention turned to other Lie
algebras. In \cite{Wein4}, Weinstein showed that semisimple Lie
algebras of real rank greater than one are non-linearizable, in
general. The case of real rank 1, with the exception of
$\mathfrak{sl}(2,\Rr)$, remains open. In \cite{Du}, Dufour studied
linearization when the isotropy belongs to a certain class of Lie
algebras, called non-resonant, which allowed him to classify all the
3-dimensional Lie algebras that entail linearizability. Dufour and
Zung proved formal and analytic linearization for the Lie algebra of
affine transformations $\mathfrak{aff}(n)$ \cite{DuZu}. There are also
examples of Poisson structures for which linearization can be decided
only from knowledge of its higher order jets (see
\cite{BaCr}). More recently, a Levi decomposition for Poisson brackets,
generalizing linearization, has been introduced by Wade (\cite{Wa},
formal category), Zung (\cite{Zun}, analytic category) and Zung and
Monnier (\cite{MonZun}, smooth category). The methods are entirely
similar to the ones of Weinstein and Conn. A survey of 
these results can be found in \cite{MonFer}.

In spite of Conn's master work, the question remained if a simple,
more geometric, proof of smooth linearization would be possible. In
the Introduction of \cite{Wein2}, Alan Weinstein writes:
\begin{quote}
\emph{Why is it so hard to prove the linearizability of Poisson structures
with semisimple linear part? Conn published proofs about 15 years ago
in a pair of papers full of elaborate estimates (\dots) no
simplification of Conn's proofs has appeared.}

\emph{This is a mystery to me, because analogous theorems about
linearizability of actions of semisimple groups near their fixed
points were proven (\dots) using a simple averaging.}
\end{quote}
In this paper he goes on to propose to use Lie algebroid/groupoid
theory to tackle this and other linearization problems. After this
work, it become clear that this would indeed be the proper setup for a
geometric proof of linearization. However, his attempt would not be
successful because some of the techniques needed were not available
yet. Some basic results on proper groupoids, as well as a full
understanding of the integrability problem for Poisson manifolds and
Lie algebroids was missing, and this was done later by us in
\cite{Cra,CrFe1,CrFe2}. 

In the end, the geometric proof we have given here, is really a
combination of classical results on Lie groups extended to the
groupoid context. Once the groupoid is brought into the picture, one
has the usual differential geometric machinery at hand, and hence also
all the standard techniques to deal with linearization problems one
finds in different contexts (Moser trick, Van Est argument, Reeb
stability, averaging). It is curious that the methods used are
so close to the proof suggested by Alan Weinstein for the case of
$\mathfrak{so}(3)$, that we have quoted above. A general setup to
discuss linearization problems and its relation to deformation
problems will be given elsewhere (work in progress).

Finally, note that it would be possible to combine our 
Proposition \ref{prop:integrable} with the linearization 
theorem for proper groupoids around fixed points (see \cite{Wein3,Zun1}), 
to obtain another proof of Conn's theorem (this would be a 
geometric-analytic proof, since the linearization of proper groupoids
also involves some estimates.)

\vskip 40 pt

\bibliographystyle{amsplain}
\def\lllll{}

\end{document}